\documentclass{article}
\usepackage[utf8]{inputenc}

\usepackage[a4paper]{geometry}
\setlength{\textwidth}{6.3in}
\setlength{\textheight}{8.7in}
\setlength{\topmargin}{0pt}
\setlength{\headsep}{0pt}
\setlength{\headheight}{0pt}
\setlength{\oddsidemargin}{0pt}
\setlength{\evensidemargin}{0pt}

\usepackage{amssymb}
\usepackage{float}
\renewcommand{\geq}{\geqslant}
\renewcommand{\leq}{\leqslant}
\renewcommand{\ge}{\geqslant}
\renewcommand{\le}{\leqslant}

\title{\bfseries\Large\textsc{ Spiders and their Kin: An Investigation of Stanley's Chromatic Symmetric Function for Spiders and Related Graphs}}
\author{
\begin{tabular}{ccc}
\begin{tabular}[t]{c}
Ang\`ele M. Foley \thanks{Formerly Ang\`ele M. Hamel}\\
\small Department of Physics and Computer Science\\
\small Wilfrid Laurier University\\
\small Waterloo, Ontario, Canada
 \end{tabular}  
&
\begin{tabular}[t]{c}
Joshua Kazdan \thanks{Undergraduate student}\\
\small Stanford University\\
\small Stanford, California, USA
\end{tabular}  
&
\begin{tabular}[t]{c}
Larissa Kr\"{o}ll  \footnotemark[2]\\
\small University of Innsbruck \\
\small Innsbruck, Austria\\
\end{tabular}  
\\ \addlinespace[2ex]
\begin{tabular}[t]{c}
Sof\'{i}a Mart\'{i}nez Alberga \footnotemark[2]\\ 
\small University of California, Riverside\\
\small Riverside, California, USA
\end{tabular}
&
\begin{tabular}[t]{c}
Oleksii Melnyk \footnotemark[2]\\
\small University of Oxford\\
\small Oxford, UK
\end{tabular}
&
\begin{tabular}[t]{c}
Alexander Tenenbaum \footnotemark[2]\\
\small University of Toronto\\
\small Toronto, Canada
\end{tabular}
\end{tabular}
}

\usepackage{comment}
\usepackage{ytableau}
\usepackage{subcaption}
\usepackage{hyperref}
\usepackage{graphicx}
\usepackage{color}
\usepackage{amsmath}
\usepackage{mdframed}
\usepackage[export]{adjustbox}
\usepackage{graphicx}					
\usepackage{booktabs}					
\usepackage{hyperref}
\usepackage{multicol}
\usepackage{amsmath}
\usepackage{amsfonts}
\usepackage{amsthm}
\newtheorem{theorem}{Theorem}
\newtheorem{prop}{Proposition}

\newtheorem{lemma}{Lemma}
\newtheorem{df}{Definition}
\newtheorem{remark}{Remark}
\newtheorem{conj}{Conjecture}

\newcommand{\nat}{P(\mathbf{m})}

\begin{document}

\maketitle

\begin{abstract}
We study the chromatic symmetric functions of graph classes related to spiders, namely generalized spider graphs (line graphs of spiders), and what we call horseshoe crab graphs.
We show that no two generalized spiders have the same chromatic symmetric function, thereby extending the work of Martin, Morin and Wagner.  Additionally, we establish that a subclass of generalized spiders, which we call generalized nets, has no $e$-positive members, providing a more general counterexample to the necessity of the claw-free condition.
We use yet another class of generalized spiders to construct a counterexample to a problem involving the $e$-positivity of claw-free, $P_4$-sparse graphs, showing that Tsujie's result on the $e$-positivity of claw-free, $P_4$-free graphs cannot be extended to graphs in this set.  Finally, we investigate the $e$-positivity of another type of graphs, the horseshoe crab graphs (a class of unit interval graphs), and prove the positivity of all but one of the coefficients.  This has close connections to the work of Gebhard and Sagan and Cho and Huh.
\end{abstract}

\section{Introduction}\label{intro}
The chromatic symmetric function, defined by Richard Stanley \cite{Stan} in 1995, is a graph invariant that generalizes the chromatic polynomial.  In order to define the chromatic symmetric function, we consider all proper colorings of a graph.  A \textbf{coloring} of a graph $G$ is a function, \[\kappa: V \rightarrow \mathbb{N}.\] 
Such a coloring is considered \textbf{proper} if $\kappa(u) \neq \kappa(v)$, where $u$ and $v$ are vertices connected by an edge.
If $G$ has a vertex set $V= \{v_1, v_2, v_3, \ldots, v_n \}$, then the \textbf{chromatic symmetric function} of $G$  is defined as the sum
\begin{equation*}
X_{G}= \sum_{\kappa}  x_{\kappa(v_1)}  x_{\kappa(v_2)}  x_{\kappa(v_3)} \cdots x_{\kappa(v_n)},
\end{equation*}
over all proper colorings $\kappa$.

The chromatic symmetric function provides information about the structure of a graph, including the number of vertices, edges, and possible acyclic orientations. 
 In 1993 Stanley and Stembridge \cite{StanStem} conjectured that the chromatic symmetric functions of claw-free incomparability graphs can be written as a linear combination of elements in the elementary symmetric function basis with nonnegative coefficients, a property called $e$-{\em positivity}. This conjecture prompted investigation into classes of $e$-positive graphs (by an abuse of notation, we say a graph is $e$-positive if its chromatic symmetric function is $e$-positive). Stanley \cite{Stan} conjectured in 1995 that the chromatic symmetric function distinguishes non-isomorphic \textbf{trees}, i.e. connected graphs without cycles. Since then Gasharov \cite{Gash} proved that claw-free incomparability graphs are Schur-positive. Regarding the stronger condition of $e$-positivity, Gebhard and Sagan \cite{GebSag} and Dahlberg and van Willigenburg \cite{DahlWill} have proved that $k$-chains and lollipop graphs are $e$-positive. Subsequently, Dahlberg, Foley, and van Willigenburg \cite{DahlFolWill} gave three infinite classes of graphs that are not $e$-positive. Furthermore, by a reduction of Guay-Paquet \cite{GuayPaq}, if all claw-free, unit-inverval incomparability graphs have $e$-positive chromatic symmetric functions, then Stanley's $e$-positivity conjecture holds for all graphs. 
 
 Regarding uniqueness, Martin, Morin, and Wagner \cite{MarMorWag} have established that particular trees (that is, spiders and some caterpillars) are uniquely determined by their chromatic symmetric functions. Tsujie \cite{Tsuj} also proved that trivially perfect graphs are uniquely determined by their chromatic symmetric functions. 
 
 The quasisymmetric variant of the chromatic symmetric function was introduced by Shareshian and Wachs \cite{SharWa}, which led to a generalization of some of Stanley's conjectures to chromatic quasisymmetric functions. For example Cho and Huh \cite{ChoHuh} used injections in order to prove $e$-positivity of certain classes of natural unit interval graphs. Cho and Hong \cite{ChoHong} extend the techniques of \cite{ChoHuh} to prove the Stanley-Stembridge conjecture for the class of unit interval graphs with independence number 3. Harada and Precup \cite{HarPre} have connected these functions to Hessenberg varieties and Pawlowski \cite{Paw} has studied chromatic symmetric functions through the group algebra of $S_n$.  
 
 In order to establish general properties of chromatic symmetric functions, researchers have typically considered more manageable families of graphs, e.g.\ spiders, caterpillars, squids, and crabs \cite{MarMorWag};  lollipops and lariats \cite{DahlWill}.  In this spirit we explore  generalized spiders and horseshoe crabs (no relation to the crabs of \cite{MarMorWag}).  In biology spiders and horseshoe crabs belong to the same phylum, {\em arthropoda}, thus there is a nice affinity between them. In graph theoretic terms these amount to $K_n$ ``bodies" with pendant vertices, attached paths, or other types of protrusion. In this way these graphs generalize both the lollipops ($K_n$ with one path attached) and nets ($K_3$ with three pendant vertices).
 
We begin by outlining some basic definitions and formulas from graph and symmetric function theory in Section \ref{back}. In Section \ref{Nets}
 we generalize Stanley's basic example of a claw-free, non-$e$-positive graph, known as the net (see Figure \ref{net}), to an infinite family of graphs that are not $e$-positive. We also use this result to show that the class of claw-free, $P_4$-free graphs, which have been proven to be $e$-positive by Tsujie in \cite{Tsuj}, cannot be further extended to claw-free, $P_4$-sparse graphs. In Section \ref{Uniq} we consider Stanley's second conjecture, demonstrating that the chromatic symmetric function distinguishes the non-isomorphic generalized spiders from each other, suggesting that the chromatic symmetric function could distinguish classes of graphs that are not trees.  Finally in Section \ref{genLol} we examine the chromatic quasisymmetric function of natural unit interval graphs. We consider the $e$-positivity  of a  certain class of graphs, the horseshoe crab graphs. Our approach generalizes the method of weight preserving injections introduced by Cho and Huh. Section \ref{future} suggests possible avenues for future research.

\section{Background and Notation}\label{back}

We will start by defining some of the additional notation necessary for this paper.

Given a graph $G=(V,E)$, we define the \textbf{line graph} $H=(V', E')$ as follows.  Let $|V'|=|E|$, and say that $\varphi: V'\rightarrow E$ is a bijection between these sets.  For $v_1, v_2\in V'$, $(v_1, v_2)\in E'$ if and only if the edges $\varphi(v_1)$ and $\varphi(v_2)$ share a vertex in $G$.  See an example in Figure \ref{exline}.

\begin{figure}[h]
\centering
\begin{subfigure}{0.45\textwidth}
\centering
\includegraphics[scale=0.3]{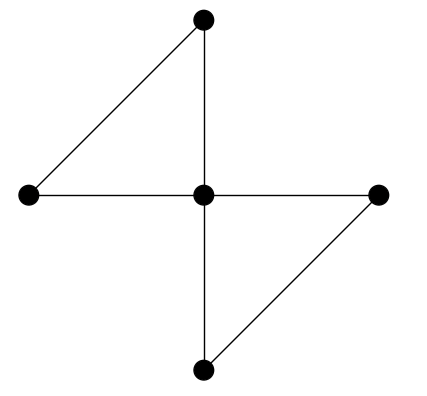}
\caption {Bowtie}
\end{subfigure}
\begin{subfigure}{0.45\textwidth}
\centering
\includegraphics[scale=0.3, angle=-45]{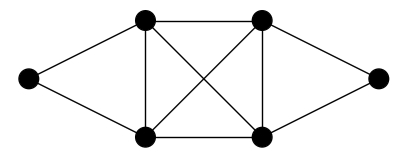}
\caption{Bowtie's Line Graph}
\end{subfigure}
\caption{Example of a Line Graph}
\label{exline} 
\end{figure}

Additionally, we will need the notion of an induced subgraph.  Given a graph $G=(V,E)$, an \textbf{induced subgraph}, $H$, on the vertex set $\tilde{V} \subset V$ has vertex set $\tilde{V}$ and edge set 
$$\tilde{E}= \left\{\{i,j\} | \{i, j\} \in \binom{\tilde{V}}{2} \cap E\right\}.$$ 
Throughout this paper, we will consider several graphs that lack certain induced subgraphs.  More precisely, a graph is said to be \textbf{$\boldsymbol{H}$-free} if it does not contain $H$ as an induced subgraph. Stanley's conjecture pertains to graphs that are claw-free, where a \textbf{claw} is defined as the complete bipartite graph $K_{1,3}$.

Let $P$ be a partially ordered set. We define the \textbf{incomparability graph} of $P$, also called the cocomparability graph in some texts, as the graph with vertex set $V=P$ and edge set $E$, where $\{x,y\} \in E$ if $x$ and $y$ are incomparable. 
A poset is said to be \textbf{$\boldsymbol{(a+b)}$-free}, if it does not contain an induced disjoint union of chains of length $a$ and $b$.
Note that the conditions claw-free and incomparability graph can also be expressed as being an incomparability graph of a ($3+1$)-free poset.
Another very important class of graphs are \textbf{natural unit interval graphs}, which are precisely incomparability graphs of ($2+2$) and ($3+1$)-free posets. Details on these graphs in general can be found in Sharesian and Wachs \cite{SharWa}.
For further definitions and information in the context of this paper, see Section \ref{genLol}.

Now consider a \textbf{partition}, a sequence of positive integers $\lambda = (\lambda_1, \lambda_2, \lambda_3, \ldots, \lambda_\ell)$ such that $\lambda_1 \geq \lambda_2 \geq \lambda_3 \geq \ldots \geq \lambda_\ell$. If $\sum_{i=1}^\ell \lambda_i = n $ we say $\lambda$ is a partition of $n$ and denote it $\lambda \vdash n$. The \textbf{conjugate} of $\lambda$ is defined as the partition $\lambda^\prime=(\lambda_1^\prime,\ldots,\lambda_{\lambda_1}^\prime)$ where $\lambda_i^\prime=|\{j:\lambda_j\geq i \}|$. 

Using partitions we can index symmetric functions and define some of the classical types of symmetric functions.
It should be noted that all of these functions form bases of the ring of symmetric functions, $\Lambda$, which we will consider later in the paper.
For more information about $\Lambda$  and symmetric functions in general, see \cite{MacDon} and \cite{Sag}.

Let $\lambda=(\lambda_1,...,\lambda_\ell)$ be a partition as before.  The \textbf{monomial symmetric function} corresponding to $\lambda$ is
\[
m_\lambda (x) = \sum_\alpha x_{1}^{\alpha_{1}} x_{2}^{\alpha_{2}} \ldots x_{n}^{\alpha_{\ell}}  
\]
where the sum is over all distinct permutations $\alpha = (\alpha_1, \alpha_2, \ldots, \alpha_\ell)$ of the entries of $\lambda= (\lambda_1, \lambda_2, \ldots, \lambda_\ell)$. 
Other very important symmetric functions, namely the \textbf{elementary} and \textbf{power sum symmetric functions}, are given by:
$$e_n (x)= m_{(1^n)}(x) = \sum_{i_1 < i_2 < \ldots < i_n} x_{i_1} x_{i_2} x_{i_3} \cdots x_{i_n}, $$ and
$$p_n (x) = m_{(n)}(x) = \sum_{i \geq 1} x_i^n $$ respectively. We will often omit the variables as understood. For $k<0$, we define $p_k = e_k =0$.
For a partition $\lambda \vdash n$ we define
$$e_\lambda = e_{\lambda_1}e_{\lambda_2}\cdots e_{\lambda_\ell}$$
and $p_\lambda$ is defined analogously. With the elementary symmetric functions the \textbf{Schur function} for a given partition $\lambda=(\lambda_1,\ldots,\lambda_\ell)$ can be obtained by the \textbf{dual Jacobi-Trudi identity} as follows:

$$ s_{\lambda}=\mathrm{det}(e_{\lambda^\prime_i-i+j}), $$
where $i,j \in [\ell]:=\{1,2,\ldots,\ell\}$. The notation $[\ell]$  will be used throughout the paper.

Note that given a partition $\lambda = (\lambda_1, \lambda_2, \lambda_3, \ldots, \lambda_\ell)$, we say a proper coloring is of type $\lambda$ if the number of vertices of each color written in descending order equals  $\lambda$.

A graph is said to be \textbf{$\boldsymbol{e}$-positive} if its corresponding chromatic symmetric function can be written as a linear combination of elementary symmetric functions with positive coefficients.

The importance of the power sum basis can be seen in the following formula.
Due to Stanley $\cite{Stan}$, we get the expansion 
\begin{equation}\label{eq: stanpow}
X_G = \sum_{S\subseteq E}(-1)^{|S|} p_{\lambda(S)},
\end{equation}where $\lambda(S)$ denotes the partition of $|V|$ whose parts correspond to the sizes of the connected components of the subgraph of $G$ with vertex set $V$ and edge set $S$.  If $G$ and $H$ are graphs, with $G\sqcup H$ denoting their disjoint union, it is known from Proposition 2.3 of Stanley \cite{Stan}:

\begin{equation}
    X_{G\sqcup H}=X_G \cdot X_H.
\end{equation}\label{eq: disjoint}
\par For section \ref{genLol}, we will also require the quasisymmetric refinement of the chromatic symmetric function, which was first stated by Shareshian and Wachs in \cite{SharWa}. Let $G=(V,E)$ be a graph. Then the \textbf{chromatic quasisymmetric function} is given by:
\begin{equation*}
    \widetilde{X}_G(t)= \sum_\kappa t^{\mathrm{asc}(\kappa)}x_{\kappa(v_1)}  x_{\kappa(v_2)} \cdots  x_{\kappa(v_n)},
\end{equation*}
where the sum is again over all proper colorings $\kappa$ and
\begin{equation*}
    \mathrm{asc}(\kappa)=\{\{v_i,v_j\} \in E : i<j \: \mathrm{and} \: \kappa(v_i)<\kappa(v_j)\}.
\end{equation*}
If $t=1$, we get the chromatic symmetric function. Consider again graphs $G,H$ and their disjoint union $G\sqcup H$. Then by \cite{SharWa}, following equation holds:
\begin{equation}
    \widetilde{X}_{G\sqcup H}(t) = \widetilde{X}_G(t) \cdot \widetilde{X}_H(t).
\end{equation}\label{eq: disjoint quasi}
For further definitions, information on which properties directly transfer from chromatic symmetric functions, and additional results on chromatic quasisymmetric functions, see Section \ref{genLol}, \cite{ChoHuh}, and \cite{SharWa}.

\section{\textit{e}-Positivity of Generalized Nets}\label{Nets}

Stanley \cite{Stan} conjectured that the claw-free incomparability conditions are sufficient for $e$-positivity.  However, he and Stembridge provided the net (Figure \ref{net}) as a counterexample to the necessity of these conditions \cite{StanStem}. We extend this counterexample to an infinite family of generalized nets, all of which are claw-free graphs, but none of which are $e$-positive.  More precisely, we define a generalized net as follows:
\begin{df}
A $\bf{generalized}$ $\bf{net}$ is a complete graph, $K_n$, $n \geq 3$,  with three additional vertices, where each vertex not in the $K_n$ is connected to a different vertex on $K_n$ via one edge.
We call the complete graph, $K_n$, the $\bf{body}$ of the $\bf{generalized}$ $\bf{net}$ and the additional vertices the $\bf{satellites}$.
\end{df}

An example of a generalized net can be seen in Figure \ref{fig: gennet}.

\begin{figure}
\centering
		\begin{subfigure}{0.45\textwidth}
			\centering
			\includegraphics[scale=0.4]{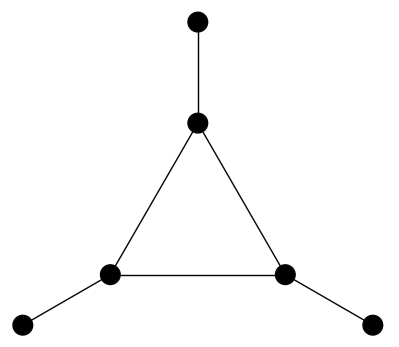}
			\caption {Net}\label{net}
		\end{subfigure}
		\begin{subfigure}{0.45\textwidth}
			\centering
			\includegraphics[scale=0.4]{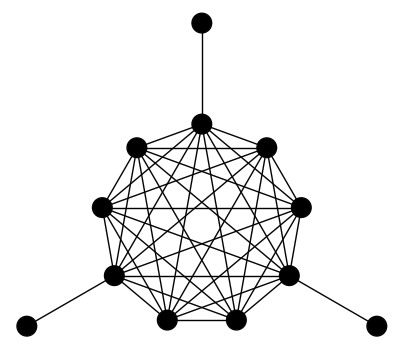}
			\caption{Generalized Net}\label{fig: gennet}
		\end{subfigure}
		\caption{}
\end{figure}

For simplicity in the following proofs, we say that a \textbf{connecting vertex} in a generalized net with body $K_n$, $n>2$ is a vertex with degree $n+1$.
This is in contrast to a \textbf{non-connecting} vertex in a generalized net, which we define to be a vertex with degree $n$.

\begin{theorem}
The chromatic symmetric function expanded in the elementary basis for a generalized net $G$ with a body of size $n,\; n\geq 3$ is:
$$
\begin{aligned}
        X_G =& (n+3)(n-1)!e_{(n+3)}+ 3(n^2-3)(n-2)!e_{(n+2,1)}\\
        &+ 6(n-1)(n-3)!e_{(n+1,2)} + 3(n^2-2n-1)(n-2)!e_{(n+1,1^2)} \\
        &+ 6(n-2)!e_{(n,2,1)}  - 6(n-3)!e_{(n,3)} +(n-3)(n-1)!e_{(n,1^3)}. 
\end{aligned}
$$
In particular, generalized nets are \emph{not} $e$-positive. 
\end{theorem}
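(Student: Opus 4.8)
The plan is to compute $X_G$ from first principles, read off its monomial expansion, and then perform a finite triangular change of basis into $\{e_\lambda\}$; the negative coefficient then falls out. Write the body as the clique $K_n$ on vertices $u_1,\dots,u_n$, with satellites $w_1,w_2,w_3$ attached at $u_1,u_2,u_3$. Since every proper coloring of $K_n$ is injective, a proper coloring of $G$ is precisely an injective coloring $\kappa_0$ of $\{u_1,\dots,u_n\}$ together with, for each $i$, a color for $w_i$ distinct from $\kappa_0(u_i)$. Hence, writing $x^{\kappa_0}:=\prod_{k=1}^n x_{\kappa_0(u_k)}$,
\[
X_G \;=\; \sum_{\kappa_0 \text{ injective}} x^{\kappa_0}\prod_{i=1}^{3}\bigl(p_1 - x_{\kappa_0(u_i)}\bigr).
\]
Expanding the product and using that $K_n$ is vertex-transitive (so the surviving inner sums depend only on how many of the factors $x_{\kappa_0(u_i)}$ are retained), this becomes an explicit integer combination of $p_1^{3}A_0$, $p_1^{2}A_1$, $p_1A_2$ and $A_3$, where $A_r=\sum_{\kappa_0}x^{\kappa_0}\,x_{\kappa_0(u_1)}\cdots x_{\kappa_0(u_r)}$.

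Each $A_r$ is immediate: because $\kappa_0$ is injective, $A_r$ is a scalar multiple of a single monomial symmetric function, the scalar counting the ways to complete the prescribed partial assignment on $u_1,\dots,u_r$ to a bijection onto the color set. Concretely $A_0=n!\,m_{(1^n)}=X_{K_n}$, $A_1=(n-1)!\,m_{(2,1^{n-1})}$, $A_2=2(n-2)!\,m_{(2,2,1^{n-2})}$, and $A_3=6(n-3)!\,m_{(2,2,2,1^{n-3})}$, so that
\[
X_G = n!\,p_1^{3}m_{(1^n)} - 3(n-1)!\,p_1^{2}m_{(2,1^{n-1})} + 6(n-2)!\,p_1 m_{(2,2,1^{n-2})} - 6(n-3)!\,m_{(2,2,2,1^{n-3})}.
\]
Multiplying out the powers of $p_1=m_{(1)}$ via the Pieri-type rule for $m_{(1)}\cdot m_\mu$ then puts $X_G$ into the monomial basis. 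A useful structural point is that the monomial support of $X_G$ is forced to lie among the seven partitions of $n+3$ having at least $n$ parts — namely $(1^{n+3})$, $(2,1^{n+1})$, $(3,1^{n})$, $(2,2,1^{n-1})$, $(2,2,2,1^{n-3})$, $(3,2,1^{n-2})$, $(4,1^{n-1})$, with the evident coalescences for small $n$ — because any proper coloring of $G$ uses at least $n$ colors.

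For the change of basis, note that the elementary functions whose own monomial support lies among those same seven partitions are exactly the seven $e_\mu$ with $\mu_1\ge n$, i.e.\ $\mu\in\{(n+3),(n+2,1),(n+1,2),(n+1,1^2),(n,3),(n,2,1),(n,1^3)\}$, and the $7\times 7$ matrix expressing these $e_\mu$ in those $m_\lambda$ is triangular for dominance order (leading term $m_{\mu'}$), hence invertible. Thus $X_G$ is uniquely a combination of precisely these seven $e_\mu$, and one determines the coefficients either by expanding the seven $e_\mu$ into monomials and solving, or simply by verifying that the claimed right-hand side, re-expanded into monomials, reproduces the computed $X_G$ (for instance one can cross-check the $m_{(1^{n+3})}$-coefficient, which must equal $(n+3)!$). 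This yields the stated formula, after which the theorem is immediate: the coefficient of $e_{(n,3)}$ equals $-6(n-3)!<0$ for every $n\ge 3$, while all other coefficients are visibly nonnegative, so no generalized net is $e$-positive.

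I expect the only real obstacle to be organizational rather than conceptual: the $p_1$-multiplications and the $7\times 7$ change of basis are elementary but bulky, and one must treat the boundary values $n=3,4$ with care, since there some partitions degenerate (e.g.\ $(2,2,2,1^{n-3})$) and several displayed coefficients either vanish or involve $0!$. An essentially equivalent route is to apply Stanley's power-sum expansion \eqref{eq: stanpow} directly to $G$, splitting each $S\subseteq E(G)$ into its clique part and its satellite part, recognizing the clique sums in terms of $X_{K_n}=n!e_n$ and of the lollipop $K_n$-with-one-pendant (whose elementary expansion $(n+1)(n-1)!\,e_{(n+1)}+(n-1)(n-1)!\,e_{(n,1)}$ can be recorded once), and then assembling; this trades monomial bookkeeping for power-sum bookkeeping at comparable cost.
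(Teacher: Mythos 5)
Your proposal is correct, and it reaches the theorem by a genuinely different route from the paper. The paper proceeds by brute-force enumeration: it lists the seven admissible coloring types and counts the colorings of each type directly on the graph, which requires a substantial sub-case analysis (the types $(2,2,1^{n-1})$ and $(2,2,2,1^{n-3})$ each split into three or four configurations depending on where the repeated colors sit relative to the connecting vertices). You instead factor the coloring generating function over the three satellites, writing $X_G=\sum_{\kappa_0}x^{\kappa_0}\prod_{i=1}^3\bigl(p_1-x_{\kappa_0(u_i)}\bigr)$, which by inclusion--exclusion and the symmetry of $K_n$ collapses the entire count into the four clean sums $A_0,\dots,A_3$, each a single scaled monomial symmetric function; the rest is multiplication by $p_1$ in the monomial basis. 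I spot-checked that your intermediate identity reproduces the paper's monomial expansion: the coefficient of $m_{(1^{n+3})}$ comes out to $n!\,(n+1)(n+2)(n+3)=(n+3)!$, and the coefficient of $m_{(2,1^{n+1})}$ comes out to $3n!\,(n+1)^2-3(n+1)!=3n(n+1)!$, both matching. Your approach trades the paper's error-prone case analysis for routine polynomial bookkeeping, and your triangularity observation (that the monomial support forces the expansion to live in the span of exactly the seven $e_\mu$ with $\mu_1\ge n$) makes explicit something the paper leaves implicit when it simply inverts the seven relations $m_\lambda\mapsto e_\mu$. What the paper's method buys in exchange is that each monomial coefficient is obtained directly as a count of colorings, with no intervening algebra. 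The only caveat is that your writeup defers the $p_1$-multiplications and the $7\times 7$ inversion; these are mechanical and your plan for them is sound, but they do still need to be executed (including the degenerate cases $n=3,4$ you already flag) before the stated $e$-expansion is actually established.
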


\begin{proof}
	Given a generalized net, $W$, on $n+3$ vertices consider all possible proper colorings, case by case. The strategy behind this proof is straightforward: first, we determine all of the possible colorings of a graph, and then count the instances of each.  We consolidate this information into an expression for the chromatic symmetric function and then change the basis in order to derive a formula for the chromatic symmetric function in the $e$-basis.  Throughout this proof, we will use the term  ``trivial coloring'' to refer to a coloring in which every vertex has a different color.  
	Let a proper coloring of $W$ be of type $(\lambda_{1},\lambda_{2},\ldots,\lambda_{r},1^{k})$, where $\lambda_1\geq \lambda_2\geq \cdots \geq \lambda_r >1$ and
	\begin{equation}\label{eq: cond for r 1}
	    \sum_{i=1}^r{\lambda_{i}} + k = n+3.
	\end{equation}
	Notice that all vertices inside the body must have different colors, so at least $n$ colors are required. This gives the condition 
	\begin{equation}\label{eq: cond for r 2}
	    r + k \geq n .
	\end{equation}
	Next, since all $\lambda_{i} $ are strictly greater than 1, \begin{equation}\label{eq: cond for r 3}
	    \sum_{i=1}^{r}{\lambda_{i}} \geq {2r}.
	\end{equation}
	Combining the facts in (\ref{eq: cond for r 1}), (\ref{eq: cond for r 2}), and (\ref{eq: cond for r 3}) gives:
	\begin{align}
	   n - k &\geq {2r-3} \label{eq: cond for r 4}, \\
        r &\geq n-k. \label{eq: cond for r 5} 
	\end{align}
	It follows that
	\begin{equation*}
	    3 \geq r.
	\end{equation*}
	Additionally, using the statements in (\ref{eq: cond for r 1}), (\ref{eq: cond for r 4}), and (\ref{eq: cond for r 5}) one finds:
	\begin{equation*}
	    r+3 \geq \sum{\lambda_{i}} \geq {2r}.
	\end{equation*}

Considering each $r$ separately and using the restrictions listed, all possible types of proper colorings are given by: 
$$ (1^{n+3}), (2,1^{n+1}), (3,1^n), (4,1^{n-1}), (2,2,1^{n-1}),(3,2,1^{n-2}), (2,2,2,1^{n-3}).$$
This means that the chromatic symmetric function of generalized nets, written in terms of monomial symmetric functions, includes only terms corresponding to the types listed above. Furthermore, the coefficients in this linear combination count the possible colorings of the type given by the indexing partition. The coefficients can now be computed by counting all proper colorings. 
In order to make it easier to differentiate between the cases, write the calculations out in bullet points:

\begin{itemize}
	\item $\mathbf{(1^{n+3})}$: All $n+3$ vertices must be colored differently (Figure \ref{alldiff}). This gives $(n+3)!$ colorings.
   	\begin{figure}[H]
		\centering
		\includegraphics[scale=0.3]{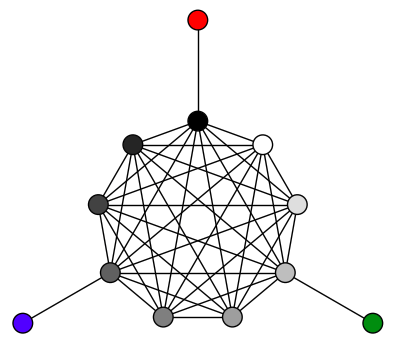}
		\caption{The trivial coloring}
		\label{alldiff} 
	\end{figure}
    \item $\mathbf{(2, 1^{n+1})}$: There are two possibilities for this type: either the vertices that share the first color are both satellites, or one is a satellite and the second one is in the body.  The first case (Figure \ref{case2a}) gives $3(n+1)!$ colorings and the second one (Figure \ref{case2b}) provides $3(n-1)(n+1)!$ colorings.  The total comes to $3n(n+1)!$.
    \begin{figure}[H]
		\centering
		\begin{subfigure}{0.45\textwidth}
			\centering
			\includegraphics[scale=0.3]{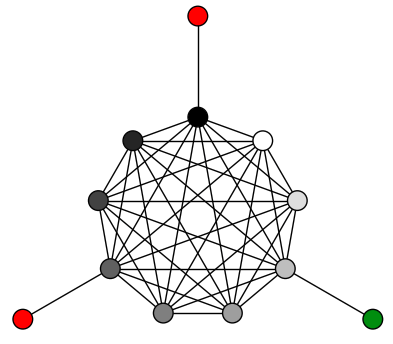}
			\caption {Repeated color appears on the satellites.}
            \label{case2a}
		\end{subfigure}
		\begin{subfigure}{0.45\textwidth}
			\centering
			\includegraphics[scale=0.3]{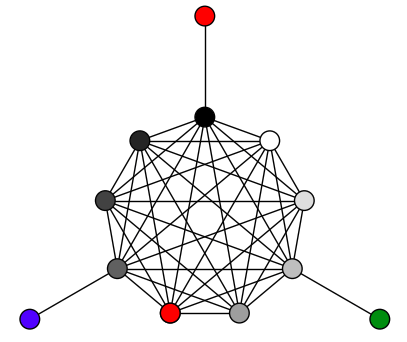}
			\caption{Repeated color appears once on a satellite and once in the body}
			\label{case2b}
		\end{subfigure}
		\caption{Two vertices share a color} 
	\end{figure}
    \item $\mathbf{(3,1^{n})}$: Two of the three vertices with the same color must be satellites, but there are two possible choices for positioning the third vertex: in the body, or on the remaining satellite. In the first case (Figure \ref{case3a}) there are three possible pairs of satellites and $n-2$ places for the vertex in the body. So in total, there are $3(n-2)n!$ options. In the second case (Figure \ref{case3b}), only the coloring of the body can vary which gives $n!$ possible colorings.  As a result, we have a total of $(3n-5)n!$ colorings.
    \begin{figure}[h]
		\centering
		\begin{subfigure}{0.45\textwidth}
			\centering
			\includegraphics[scale=0.3]{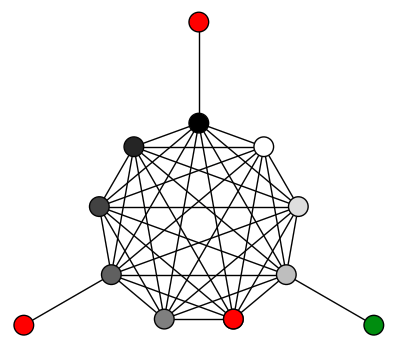}
			\caption {Two of the vertices that share a color are in satellites and one is in the body.}
            \label{case3a}
		\end{subfigure}
		\begin{subfigure}{0.45\textwidth}
			\centering
			\includegraphics[scale=0.3]{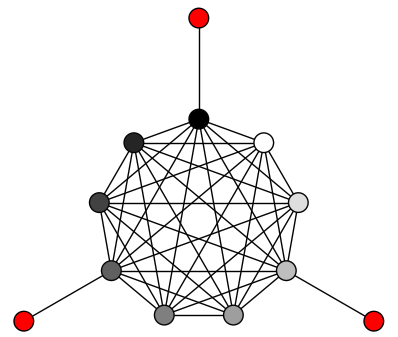}
			\caption{All three vertices that share a color are in satellites.}
			\label{case3b}
		\end{subfigure}
		\caption{Three vertices share a color} 
	\end{figure}
    \item $\mathbf{(4,1^{n-1})}$: The only way to achieve this type is to use the same color for all of the satellites and one non-connecting vertex in the body.  We color all the other vertices in distinct colors (Figure \ref{fig6}). There are $(n-3)$ places for the fourth vertex of the same color (the one in the body) and $(n-1)!$ possible ways to color the rest. This gives $(n-3)(n-1)!$ colorings.
    \begin{figure}[H]
		\centering
		\includegraphics[scale=0.3]{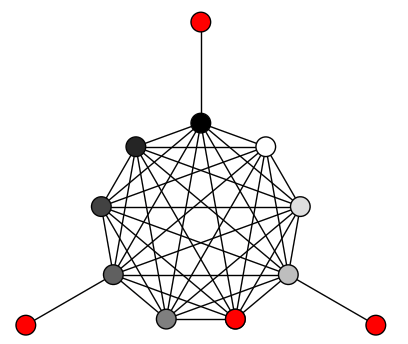}
		\caption{All satellites, and one vertex in the body, are colored the same color.}
		\label{fig6} 
	\end{figure}
    \item $\mathbf{(2,2,1^{n-1})}$: First possible case: Of the two repeated colors, the first color appears once in the body and once on a satellite while the other repeated color appears twice on satellites and once in the body (Figure \ref{fig: case5a}). There are two choices for the repeated color appearing on the satellites twice, three choices for other repeated color, $n-1$ choices for the fourth vertex in the body, and then $(n-1)!$ possibilities for the trivial colorings of the rest of the vertices. This gives a total of $6(n-1)(n-1)!$ colorings of this type.
    
   The second case occurs when both of the repeated colors appear once in the body and once on a satellite. There are three variants in which this can happen. The first sub-case, illustrated in Figure \ref{case5b}, occurs when two vertices connected to satellites are colored with one of the repeated colors. There are three choices of one color and a symmetric configuration for the other two colors. We color the remaining vertices distinctly. This gives a total of $6(n-1)!$ colorings. The second variant, illustrated in Figure \ref{case5c}, occurs when one vertex connected to a satellite is colored with one of the repeated colors.  There are 12 ways of arranging the satellites.  We choose the other two repeated colors arbitrarily in the body, with $(n-2)(n-1)!$ choices. This gives a total of $12(n-2)(n-1)!$ colorings. The last case, shown in Figure \ref{case5d}, occurs when none of the vertices connected to a satellite is colored with one of the repeated colors.  This case gives $6(n-2)(n-3)(n-1)!$, where the $6$ comes from different choices on the satellites, and the $(n-2)(n-3)(n-1)!$ comes from the different colorings of the body. Combining all of the above cases results in $6(n^2-2n+2)(n-1)!$ colorings.
    \begin{figure}[H]
		\centering
		\begin{subfigure}{0.45\textwidth}
			\centering
			\includegraphics[scale=0.3]{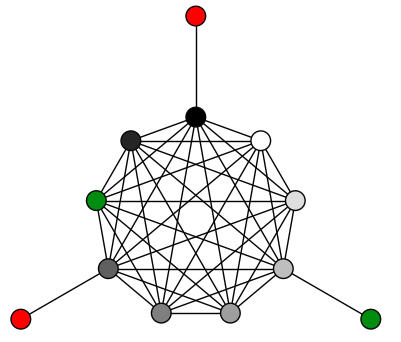}
			\caption {One repeated color appears twice in the body and the other repeated color appears once on a satellite and once in the body.}
            \label{fig: case5a}
		\end{subfigure}
		\begin{subfigure}{0.45\textwidth}
			\centering
			\includegraphics[scale=0.3]{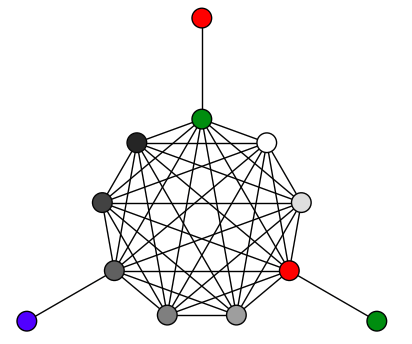}
			\caption{Both repeated colors appear once in the body and once on a satellite: case one.}\label{case5b}
		\end{subfigure}
		\centering
		\begin{subfigure}{0.45\textwidth}
			\centering
			\includegraphics[scale=0.3]{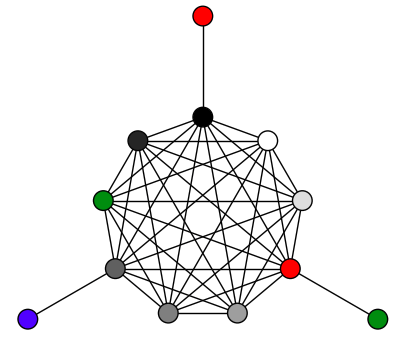}
			\caption {Both repeated colors appear once in the body and once on a satellite: case two}
            \label{case5c}
		\end{subfigure}
		\begin{subfigure}{0.45\textwidth}
			\centering
			\includegraphics[scale=0.3]{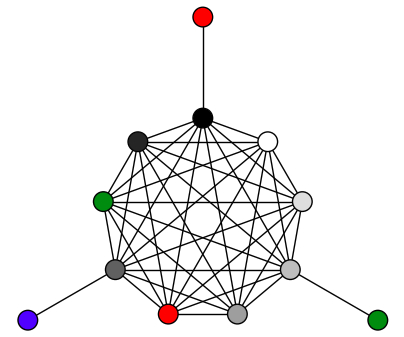}
			\caption{Both repeated colors appear once in the body and once on a satellite: case three}\label{case5d}
		\end{subfigure}
		\caption{Two pairs of vertices share a color.} 
	\end{figure}
    \item $\mathbf{(3,2,1^{n-2})}$: There is only one way to split the colors that appear between the vertices of the body and the satellites. However, there are two cases to consider: the vertex in the body of the first color can be on a connecting vertex, or not (Figure \ref{fig: case6}). 
    The first case gives $3(n-1)!$ colorings. The second one gives $3(n-3)(n-2)(n-2)!$. In total, $3(n^2-4n+5)(n-2)!$.
    \begin{figure}[H]
		\centering
		\begin{subfigure}{0.45\textwidth}
			\centering
			\includegraphics[scale=0.3]{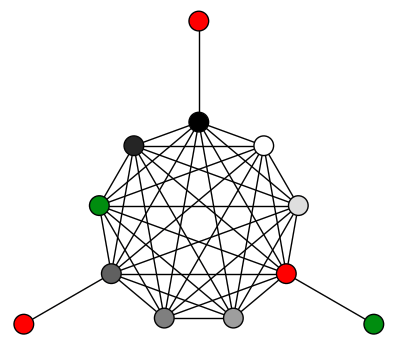}
			\caption {Repeated color appears on the satellites and once on a connecting vertex in the body.}
		\end{subfigure}
		\begin{subfigure}{0.45\textwidth}
			\centering
			\includegraphics[scale=0.3]{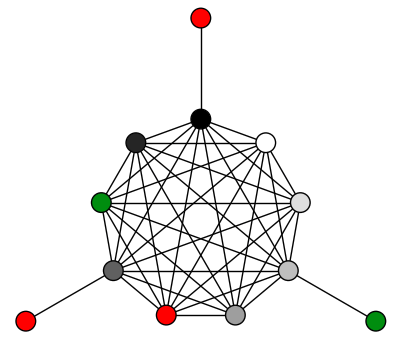}
			\caption{Repeated color appears once on a satellite and once on a non-connecting vertex in the body.}
		\end{subfigure}
		\caption{Three vertices share a color} \label{fig: case6}
	\end{figure}
    \item $\mathbf{(2,2,2,1^{n-3})}$: Split the case such that the number of connecting vertices which share a color with the satellites is $0,1,2$, or $3$.  This gives $4$ distinct possibilities.
    In the first sub-case (Figure \ref{fig: case7a}), there are 6 choices for how to color the satellites, as well as a choice of which non-connecting vertices will share a color with the satellites. There are $6(n-3)(n-4)(n-5)(n-3)!$ colorings of this type.
    
    In the second sub-case, we choose two pairs consisting of a satellite a connecting vertex attached to a different satellite, (Figure \ref{fig: case7b}). There are $3$ choices for the colors of the satellites and $2$ possible places for the non-repeated color. Adjusting the computation from the previous case accordingly gives $36(n-3)(n-4)(n-3)!$ colorings.
    
    In the third case (Figure \ref{fig: case7c}) there are $6$ ways to color the satellites, $3$ choices for the colors, and $3$ ways to place the pairs vertices sharing a color. This gives $54(n-3)(n-3)!$ colorings of this type.
    
    In the last case (Figure \ref{fig: case7d}), we choose $3$ colors for the satellites ($3!=6$ choices).  For each satellite coloring, there are two choices for the ways to color the connecting vertices so that they are colored differently from their respective satellites, and we color the remaining vertices distinctly ($(n-3!)$ choices). This leads to $12(n-3)!$ colorings.
    These facts lead to $6(n^3-6n^2+14n-13)(n-3)!$ colorings.
    \begin{figure}[H]
		\centering
		\begin{subfigure}{0.45\textwidth}
			\centering
			\includegraphics[scale=0.3]{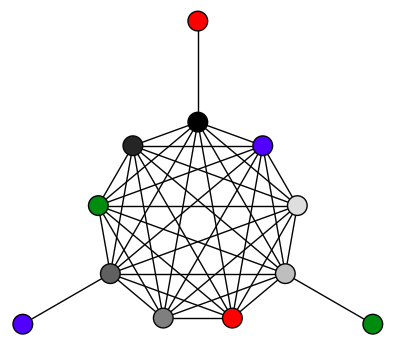}
			\caption {None of the vertices connected to satellites has the same color as any of the satellites.}\label{fig: case7a}
		\end{subfigure}
		\begin{subfigure}{0.45\textwidth}
			\centering
			\includegraphics[scale=0.3]{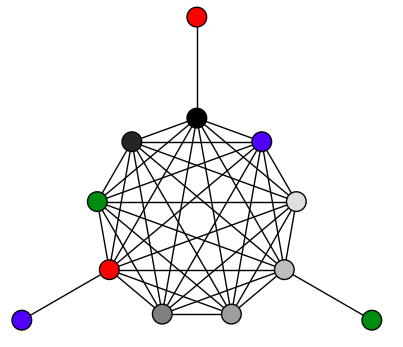}
			\caption{One of the vertices connected to a satellite has the same color as a satellite.}\label{fig: case7b}
		\end{subfigure}
		\centering
		\begin{subfigure}{0.45\textwidth}
			\centering
			\includegraphics[scale=0.3]{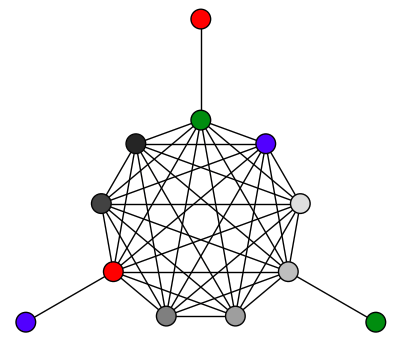}
			\caption {Two of the vertices connected to the satellites have the same colors as two of the satellites.}\label{fig: case7c}
		\end{subfigure}
		\begin{subfigure}{0.45\textwidth}
			\centering
			\includegraphics[scale=0.3]{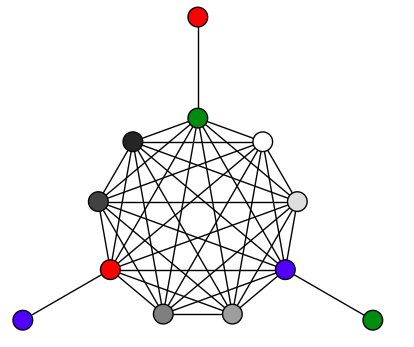}
			\caption{All three vertices connected to the satellites have the same colors as the satellites.}\label{fig: case7d}
		\end{subfigure}
		\caption{Three pairs of vertices share a color} 
	\end{figure}
\end{itemize}
	Combining these results gives: 
$$    
\begin{aligned}
    	X_G =& (n+3)!m_{(1^{n+3})}+ 3n(n+1)!m_{(2,1^{n+1})} 6(n^2-2n+2)(n-1)!m_{(2,2,1^{n-1})}  \\ &+ 6(n^3-6n^2+14n-13)(n-3)!m_{(2,2,2,1^{n-3})} + (3n-5)n!m_{(3,1^n)}  \\ &+ 3(n^2-4n+5)(n-2)!m_{(3,2,1^{n-2})} + (n-3)(n-1)!m_{(4,1^{n-1})} 
\end{aligned}
$$
        Now a change of basis from monomial to elementary symmetric functions is required in order to determine whether our chromatic symmetric function is $e$-positive. 
        By looking at the expression of monomial symmetric functions in terms of elementary symmetric functions one can verify using the well-known identity $e_\lambda= \sum M_{\lambda \mu}m_\mu$,  \cite{Sag}, that:
$$
\begin{aligned}
    	m_{(1^{n+3})} &= e_{(n+3)}\\
        m_{(2,1^{n+1})} &= e_{(n+2,1)} - (n+3)e_{(n+3)}\\
        m_{(2,2,1^{n-1})} &= e_{(n+1,2)}-(n+1)e_{(n+2,1)}+\tfrac{n(n+3)}{2}e_{(n+3)} \\ 
        m_{(2,2,2,1^{n-3})}  &= e_{(n,3)} - (n-1)e_{(n+1,2)}+ \tfrac{(n-2)(n+1)}{2}e_{(n+2,1)} - \\ 
        & -\tfrac{(n-2)(n-1)(n+3)}{6}e_{(n+3)}\\
        m_{(3,1^n)} &= e_{(n+1,1^2)} -2e_{(n+1,2)} - e_{(n+2,1)} + (n+3)e_{(n+3)}\\
        m_{(3,2,1^{n-2})} &= e_{(n,2,1)} - 3e_{(n,3)} - ne_{(n+1,1^2)} + 2(n-1)e_{(n+1,2)} +\\ 
        &+ (2n+1)e_{(n+2,1)} - (n+3)(n-1)e_{(n+3)}  \\ 
        m_{(4,1^{n-1})} &= e_{(n,1^3)}-3e_{(n,2,1)}+3e_{(n,3)}-e_{(n+1,1^2)}+\\
        &+2e_{(n+1,2)}+e_{(n+2,1)}-(n+3)e_{(n+3)}
\end{aligned}
$$
        Now using these expressions, the explicit formula for the chromatic symmetric function in the elementary basis for a generalized net with a body of size $n$ is:
$$        
    \begin{aligned}X_G =& (n+3)(n-1)!e_{(n+3)}+ 3(n^2-3)(n-2)!e_{(n+2,1)}\\
        &+ 6(n-1)(n-3)!e_{(n+1,2)} + 3(n^2-2n-1)(n-2)!e_{(n+1,1,1)} \\
        &+ 6(n-2)!e_{(n,2,1)}  - 6(n-3)!e_{(n,3)} +(n-3)(n-1)!e_{(n,1,1,1)}. 
    \end{aligned}
$$    

One can see that the $e_{(n,3)}$ term always has a negative coefficient, implying that generalized nets are not $e$-positive.
    
\end{proof}

\subsection*{Extension of (claw, $P_4$)-free Graphs}

Tsujie \cite{Tsuj} proved that (claw, $P_4$)-free graphs are $e$-positive. With our now established class of generalized nets we can show that Tsujie's $e$-positivity result does not extend to claw-free $P_4$-sparse graphs. $P_4$-sparse graphs were introduced by Ho\`{a}ng  \cite{Hoang} as a generalization of $P_4$-free graphs.  That is, every $P_4$-free graph is also $P_4$-sparse, but the converse does not hold. A graph $G$ is $P_4$-\textbf{sparse} if for every set of five vertices in $G$ there exists at most one induced $P_4$. These graphs are significant because they obey several structure theorems \cite{BM, GV, JO1, JO, Hoang}. As a further connection spiders and related graphs, note that Ho\`ang also proved that if $G$ is $P_4$-sparse, then $G$ is a (i) spider or co-spider, or (ii) $G$ has disconnected complement, or (iii) $G$ has a clique cutset \cite{Hoang}.  Therefore, a major subclass of $P_4$-sparse graphs consists of members related to spiders.

\begin{prop}
There are infinitely many claw-free, $P_4$-sparse graphs that are not $e$-positive.
\end{prop}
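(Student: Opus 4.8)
The plan is to observe that the generalized nets themselves already supply the required examples. Fix $n\ge 3$ and let $W_n$ be the generalized net whose body is $K_n$ on vertices $u_1,\dots,u_n$, with satellites $s_1,s_2,s_3$ adjacent respectively to $u_1,u_2,u_3$. Since $W_n$ has $n+3$ vertices, the graphs $W_3,W_4,W_5,\dots$ are pairwise non-isomorphic, so it suffices to show that each $W_n$ is claw-free, is $P_4$-sparse, and is not $e$-positive. The last point is immediate from the theorem above: the coefficient of $e_{(n,3)}$ in $X_{W_n}$ is $-6(n-3)!<0$.

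Claw-freeness I would check by inspecting the candidate centres of an induced $K_{1,3}$. A satellite has degree $1$ and cannot be a centre. A non-connecting body vertex has its entire neighbourhood inside the clique $K_n$, hence cannot be a centre. For a connecting vertex $u_i$ (with $i\le 3$) the neighbourhood is the clique $\{u_j:j\ne i\}$ together with the single satellite $s_i$; any independent set contained in it uses at most one vertex $u_j$ and possibly $s_i$, so has size at most $2$. Hence $W_n$ has no induced claw.

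For $P_4$-sparseness I would first classify the induced copies of $P_4$ in $W_n$. Since an interior vertex of a $P_4$ has two neighbours on the path, no satellite can be interior, so every satellite appearing in an induced $P_4$ occurs as an endpoint. An induced $P_4$ with no satellite would be a four-element subset of the clique $K_n$, which is impossible; one with three satellites is impossible because no vertex of $W_n$ is adjacent to two satellites, so a $P_4$ using three satellites would have at most one edge; and one with exactly one satellite $s_i$ forces $u_i$ to be interior and adjacent to a further path vertex lying in $K_n$, which is then adjacent to $u_i$ as well, contradicting that the path is induced. Therefore every induced $P_4$ of $W_n$ uses exactly two satellites as its endpoints and hence equals $s_i-u_i-u_j-s_j$ for some $\{i,j\}\subseteq\{1,2,3\}$, and conversely each such path is induced. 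Thus $W_n$ has exactly three induced $P_4$'s, and any two of them share exactly two vertices and so span six vertices. Consequently no five-element vertex subset of $W_n$ contains two induced $P_4$'s, so $W_n$ is $P_4$-sparse, which would complete the proof.

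The only step requiring genuine care is the exhaustive classification of induced $P_4$'s; once that is in hand, the $P_4$-sparse condition follows purely from a vertex count. As an alternative to this direct argument one could appeal to Ho\`ang's structure theorem for $P_4$-sparse graphs cited above, but the hands-on argument is shorter and keeps the proof self-contained.
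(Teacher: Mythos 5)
Your proof is correct and follows essentially the same route as the paper: both exhibit the generalized nets as the desired infinite family and verify that they are claw-free, $P_4$-sparse, and (via the negative coefficient of $e_{(n,3)}$) not $e$-positive. Your verification of $P_4$-sparseness---classifying all induced $P_4$'s as the three paths $s_i\text{--}u_i\text{--}u_j\text{--}s_j$ and observing that any two of them span six vertices---is in fact tighter than the paper's five-vertex case analysis, and you also check claw-freeness explicitly where the paper merely asserts it.
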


\begin{proof}
Generalized nets are claw-free and exhibit $P_4$-sparseness. If one selects five vertices in the body of the net, then the induced graph is $K_5$. On one hand, if one instead selects all three satellites, then the two remaining vertices are in the body. Hence, the induced subgraph necessarily contains one isolated vertex, and can contain at most one copy of $P_4$. Alternatively, if one selects three or more vertices that are part of the body, the induced subgraph either contains a copy of $K_i$ for $i=4$ or $5$, or it is isomorphic to a triangle with two additional edges attaching pendant vertices.  In the first case, there are no copies of $P_4$, and in the second, the induced subgraph contains exactly one copy of $P_4$. So the class of generalized nets gives an infinite family of claw-free, $P_4$-sparse graphs that are not $e$-positive.
\end{proof}

\section{Uniqueness of Generalized Spiders}\label{Uniq}
As mentioned before a natural question to ask is whether the chromatic symmetric function of a graph determines the graph up to isomorphism. We now give a class of graphs that are not trees, but whose members are still distinguished from each other by their chromatic symmetric functions. We will see that this class is an even further generalization of generalized nets, as we now allow multiple paths of arbitrary length instead of just three pendant vertices attached to the copy of $K_n$.
\begin{df}
A $\bf{generalized}$ $\bf{spider}$ is copy of $K_n, n \geq 3$, with paths of variable length attached such that each vertex in the body has at most degree $n$. These paths of length $(m_1,\ldots,m_\ell), \ell \leq n$ are called \textbf{legs}.
\end{df}

\noindent The name ``generalized spiders" was inspired by Martin's definition of a \textbf{spider} as a tree in which there is precisely one vertex $v$ with $\deg v\geq 3$ \cite{MarMorWag}.   Graphs can also be constructed by replacing the unique vertex, $v$ with $deg(v)\geq 3$, of a spider by a body of size $n \geq deg(v)$.
It is important to note that this is not the only connection to spiders as the following remark shows.

\begin{prop}\label{SpidLine}
G is a generalized spider with a body of size $n \geq 3$ if and only if G is the line graph of some spider.
\end{prop}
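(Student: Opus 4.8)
The plan is to prove both directions of the equivalence by carefully analyzing the structure of line graphs of spiders and comparing them to the definition of a generalized spider.

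First I would set up notation. Let $S$ be a spider with central vertex $v$ of degree $d \geq 3$, and legs $L_1, \dots, L_d$ emanating from $v$, where leg $L_i$ is a path $v = u_0^{(i)}, u_1^{(i)}, \dots, u_{m_i}^{(i)}$ of length $m_i \geq 1$. The edge set of $S$ is partitioned by the legs: the edges of $L_i$ are $e_j^{(i)} = \{u_{j-1}^{(i)}, u_j^{(i)}\}$ for $j = 1, \dots, m_i$. In the line graph $L(S)$, the vertices are these edges, and two are adjacent iff they share an endpoint in $S$. The key observation is that within a single leg $L_i$, consecutive edges $e_j^{(i)}$ and $e_{j+1}^{(i)}$ share the vertex $u_j^{(i)}$, so the edges of $L_i$ form a path in $L(S)$. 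Moreover, the $d$ edges incident to $v$, namely $e_1^{(1)}, \dots, e_1^{(d)}$, pairwise share the vertex $v$, hence they form a clique $K_d$ in $L(S)$. Any edge not incident to $v$ lies strictly inside some leg and only shares endpoints with the (at most two) edges adjacent to it on that leg. Assembling this: $L(S)$ consists of a $K_d$ on the vertices $\{e_1^{(i)}\}_{i=1}^d$, with a path of length $m_i - 1$ attached at the vertex $e_1^{(i)}$ (the path being the remaining edges $e_2^{(i)}, \dots, e_{m_i}^{(i)}$ of leg $i$). Since distinct legs attach to distinct vertices of the $K_d$, and each body vertex $e_1^{(i)}$ has exactly one leg attached (degree $d - 1$ inside the clique plus at most one more), the degree condition ``each body vertex has degree at most $n$'' with $n = d$ is satisfied. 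This shows $L(S)$ is a generalized spider with body size $d$; I should also note $d \geq 3$ so the hypothesis $n \geq 3$ holds.

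For the converse, given a generalized spider $G$ with body $K_n$, $n \geq 3$, and legs of lengths $m_1, \dots, m_\ell$ with $\ell \leq n$ attached to distinct body vertices, I would construct a spider $S$ whose line graph is $G$. Take $S$ to have a central vertex $v$ of degree $n$. To each of the $\ell$ body vertices carrying a leg of length $m_i$, associate a leg of $S$ of length $m_i + 1$; to each of the remaining $n - \ell$ body vertices (those with degree exactly $n-1$ in $G$), associate a leg of $S$ of length $1$ (a single pendant edge). Then $S$ is a spider with central degree $n \geq 3$, and running the forward computation shows $L(S) \cong G$: the $K_n$ body is recovered from the $n$ edges at $v$, a length-$(m_i+1)$ leg of $S$ contributes a path of $m_i$ extra line-graph vertices hanging off the corresponding clique vertex, and a length-$1$ leg contributes nothing beyond its clique vertex. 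One has to check this is a well-defined isomorphism, but it is essentially bookkeeping given the forward direction.

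The main obstacle — really the only subtle point — is making sure the structural description of $L(S)$ in the forward direction is tight: that no extra adjacencies appear in $L(S)$ beyond the clique and the leg-paths, and that the degree bound in the generalized spider definition is exactly met rather than exceeded. This comes down to the fact that in a spider only the central vertex has degree $\geq 3$, so every non-central vertex lies on a unique leg and is incident to at most two edges; hence no line-graph vertex outside the clique can be adjacent to more than two others within its component structure, and the clique vertices $e_1^{(i)}$ pick up exactly one extra neighbor each (from their own leg) — giving degree $(n-1) + 1 = n$ when $m_i \geq 2$, degree $(n-1)+1 = n$ when $m_i = 1$ as well since $e_1^{(i)}$ is then also adjacent to $e_1^{(i)}$'s ... wait, when $m_i=1$ there is no $e_2^{(i)}$, so degree is $n-1$. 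Either way the degree is at most $n$, matching the definition. I would present this degree accounting explicitly, since it is the one place the ``at most degree $n$'' clause of the definition is actually used, and then the two constructions are visibly inverse to each other.
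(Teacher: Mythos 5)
Your proof is correct and follows essentially the same route as the paper: the $n$ edges at the centre of the spider become the clique, each leg of length $m_i$ becomes an attached path of length $m_i-1$ (legs of length $1$ contributing only a body vertex), and the converse is obtained by inverting this construction. Your explicit description of the inverse map (pendant edges for the $n-\ell$ legless body vertices) in fact makes cleaner the paper's brief remark that for $n=3$ the $K_3$ body must be pulled back to a claw rather than a triangle.
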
 

\begin{figure}[ht]
\centering
\begin{subfigure}{0.45\textwidth}
\centering
\includegraphics[scale=0.3]{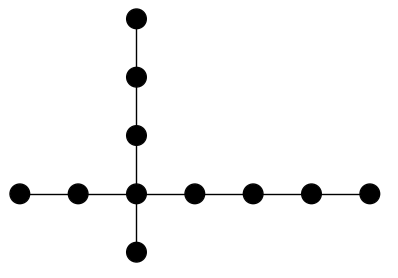}
\caption{Spider}
\end{subfigure}
\begin{subfigure}{0.45\textwidth}
\centering
\includegraphics[scale=0.32]{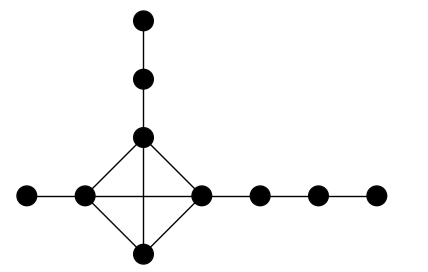}
\caption{Line Graph of Spider}
\end{subfigure}
\caption{}
\label{fig: spidergen} 
\end{figure}

\begin{proof}
Let $S=(V,E)$ be a spider and let $v \in V$ be the unique vertex with $deg(v) \geq 3$. Forming the line graph of $S$ produces a body of size $deg(v)$ as all adjacent edges share a vertex in $v$ and therefore are connected vertices in the line graph. It is easy to see that the line graph of a path of length $n$ is a path of length $n-1$. So if $S$ has legs of length $(m_1, \ldots, m_\ell)$ the corresponding generalized spider has legs of length $(m_1-1, \ldots, m_\ell-1)$. Legs of length 1 in $S$ are consequently just contributing to the size of the body. Reversing the line graph construction (with the convention that for $n=3$ the $K_3$ must be transformed to a claw not a $K_3$) shows that a generalized spider can be transformed to a spider. An illustration can be seen in Figure \ref{fig: spidergen}.
\end{proof}

With this observation we can now use the uniqueness result regarding spiders to prove uniqueness of the chromatic symmetric function of their generalized counterparts. 

\begin{theorem}
The chromatic symmetric function uniquely distinguishes spiders \cite{MarMorWag}.
\end{theorem}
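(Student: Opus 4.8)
Since this statement is exactly the spider case of the result of Martin, Morin, and Wagner, the short answer is that the plan is to cite \cite{MarMorWag}; but let me describe the argument one would carry out. The starting point is Stanley's power-sum expansion \eqref{eq: stanpow}. Regrouping the sum over $S\subseteq E$ according to the spanning subforest $F=(V,S)$ it determines, the coefficient of $p_\mu$ in $X_G$ equals $(-1)^{|V|-\ell(\mu)}$ times the number of spanning subforests of $G$ whose multiset of component sizes is $\mu$. In particular, reading off the coefficients of the partitions $(j,1^{n-j})$ recovers, for each $j$, the number $s_j$ of $j$-vertex subtrees of $G$, and the coefficients of $(j_1,j_2,1^{n-j_1-j_2})$, $(j_1,j_2,j_3,1^{\dots})$, and so on recover the numbers of vertex-disjoint families of subtrees of prescribed sizes. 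So the problem is reduced to: do these subtree-counting statistics determine a spider's leg-length multiset?

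First I would specialize to a spider $S$ with legs $a_1\ge a_2\ge\cdots\ge a_k$ (so $k\ge 3$ and $n=1+\sum_i a_i$) and compute the $s_j$. A $j$-vertex subtree of $S$ is either a subpath contained in a single leg, contributing $\max(0,a_i-j+1)$ from leg $i$, or it contains the centre, in which case it is a contiguous length-$b_i$ initial segment of each leg with $\sum_i b_i=j-1$, and there are $[q^{\,j-1}]\prod_{i=1}^k(1+q+\cdots+q^{a_i})$ of those. The $s_j$ for $j$ close to $n$ already yield a lot: $s_{n-1}=k$ recovers the number of legs, and $s_{n-m}=\binom{m+k-1}{k-1}$ for $0\le m\le a_k$ but fails for $m=a_k+1$, which pins down the shortest leg (and a little more bookkeeping its multiplicity). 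The remaining work is to recover the whole multiset; the clean way to phrase the target is that it suffices to reconstruct the polynomial $\prod_i(1+q+\cdots+q^{a_i})=\prod_i\frac{1-q^{a_i+1}}{1-q}$, since the multiset $\{a_i+1\}$ is then read off from the cyclotomic factorization of $\prod_i(1-q^{a_i+1})$.

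The step I expect to be the genuine obstacle, and the technical heart of \cite{MarMorWag}, is disentangling the two kinds of subtrees — those running through the centre and those trapped inside one leg — since only their sum $s_j$ is visible as a single coefficient of $X_G$. This is where one must bring in the finer statistics (counts of vertex-disjoint families of subtrees, together with their signs) rather than the $s_j$ alone, or else run a careful induction peeling one leg at a time; everything downstream of that separation is the elementary observation that a multiset $\{b_i\}$ is determined by $\prod_i(1-q^{b_i})$. Finally, I note that once this theorem is available it is used in tandem with Proposition~\ref{SpidLine}: identifying generalized spiders with line graphs of spiders transports the uniqueness statement to the generalized setting, provided one checks that the leg-shortening correspondence is injective on spiders (with the $K_3$-versus-claw convention recorded there).
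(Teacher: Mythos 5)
The paper offers no proof of this statement at all: it is imported verbatim from Martin, Morin, and Wagner, with the citation \cite{MarMorWag} standing in for the argument, which is exactly what you propose to do. Your supplementary sketch of how the MMW proof might go (power-sum expansion, subforest counts, reconstructing the leg multiset) is a reasonable outline whose admitted gap is immaterial here, since the paper itself relies entirely on the citation and your approach therefore matches the paper's.
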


We know that we can extract information about the graph from the chromatic symmetric function. More specifically, the number of $k$-matchings and the number of independent sets can be found. A $k$-\textbf{matching} is a set of $k$ independent edges, i.e. no two vertices in the set share a vertex. An \textbf{independent (or stable) set} is defined similarly as a subset of vertices $I$ in a graph such that no two vertices in $I$ are adjacent. These sets have associated polynomials.

\begin{df}
If $m_k$ is the number of $k$-matchings in a graph, then the \textbf{matching polynomial} is $$\mu(x)=\sum_{k\geq0}^{|E|}m_kx^k.$$

\end{df}

The value $m_k$ can be recovered from the chromatic symmetric function expanded in the power sum basis stated in equation (\ref{eq: stanpow}).  Because knowledge of $m_k$ for each $k$ determines $\mu(x)$, the matching polynomial can be recovered from the chromatic symmetric function \cite{OrScott}.
We define the independence polynomial of $G$ as:
\begin{equation*}
I(x)=\sum_{i=0}^{|V|}\Phi_ix^i,
\end{equation*}
where $\Phi_i$ stands for the number of independent sets of size $i$. Note that $I(x)$ is sometimes referred to as the stable set polynomial, see \cite{Stan1}.\\ 

Though it is a well-known fact that the independence polynomial is determined by the chromatic symmetric function, we have been unable to find a proof in the literature.  Thus, we provide one for completeness.

\begin{lemma}\label{IndCSF}
For a graph $G$ the number of independent sets of a certain size, $k$, can be recovered from its corresponding chromatic symmetric function.
\end{lemma}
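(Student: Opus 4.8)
The plan is to recover the independence polynomial $I(x)$ from the chromatic symmetric function $X_G$ by working in the augmented monomial (equivalently, power-sum) basis and extracting the coefficient of a well-chosen monomial symmetric function. The key observation is this: for each $k$, consider the coefficient of $m_{(n-k, 1^{k})}$ in the monomial expansion $X_G = \sum_\lambda c_\lambda m_\lambda$, where $n = |V|$. A proper coloring of type $(n-k, 1^k)$ uses one color on a set $S$ of $n-k$ vertices and $k$ further distinct colors on the remaining $k$ vertices; properness forces $S$ to be an independent set of size $n-k$, and the remaining $k$ vertices are automatically properly colored since they all get distinct colors. Counting these colorings (accounting for the $k!$ orderings of the singleton colors) shows $c_{(n-k,1^k)} = k! \cdot \Phi_{n-k}$, so $\Phi_{n-k}$, and hence $\Phi_j$ for every $j$, is determined by $X_G$.

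First I would set $n = |V(G)|$, which is recoverable from $X_G$ (e.g.\ as the degree of each monomial, or from the power-sum expansion (\ref{eq: stanpow})). Next I would fix $k$ with $0 \le k \le n$ and analyze proper colorings of type $(n-k, 1^k)$: the one repeated color is assigned to an $(n-k)$-subset $S \subseteq V$, and this assignment is proper if and only if $S$ is independent; conversely any independent $S$ of size $n-k$ together with any bijection from $V \setminus S$ to a set of $k$ distinct new colors yields such a proper coloring. Then I would translate this count into the monomial-basis coefficient. In the chromatic symmetric function, the monomial $x_1^{n-k} x_2 x_3 \cdots x_{k+1}$ arises from exactly those proper colorings using color $1$ on an independent $(n-k)$-set and colors $2, \ldots, k+1$ bijectively on the rest, of which there are $\Phi_{n-k} \cdot k!$; since $m_{(n-k,1^k)}$ collects all monomials of this shape with coefficient $1$ on each, we get $c_{(n-k,1^k)} = \Phi_{n-k} \cdot k!$. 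Finally, reading off $c_{(n-k,1^k)}$ from $X_G$ and dividing by $k!$ recovers each $\Phi_j$, hence $I(x) = \sum_j \Phi_j x^j$.

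A couple of small points need care rather than constituting a genuine obstacle. One must handle degenerate shapes: when $k = 0$ the partition is $(n)$ and $\Phi_n$ is $1$ if $G$ has no edges and $0$ otherwise, consistent with the formula; when $n - k \le 1$ the partition $(n-k,1^k)$ is really $(1^n)$ for $k = n-1$ or $k = n$, and $m_{(1^n)}$'s coefficient counts proper colorings with all-distinct colors, which does not isolate a single $\Phi_j$ — but $\Phi_0 = 1$ and $\Phi_1 = n$ are trivially known, so only $2 \le n-k$ matters, and there the shape $(n-k, 1^k)$ is a genuine partition distinct from $(1^n)$. The other point is simply to be explicit that $X_G$ determines all its monomial-basis coefficients (it is a symmetric function written canonically), so extracting $c_{(n-k,1^k)}$ is legitimate. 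The main conceptual step — and the only place where the argument could go wrong if stated sloppily — is the biconditional ``the coloring is proper $\iff$ $S$ is independent,'' which relies on the remaining $k$ vertices receiving pairwise distinct colors different from the repeated one; I would state and verify this carefully, after which the counting and the division by $k!$ are routine.
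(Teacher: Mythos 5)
Your proposal is correct and follows essentially the same route as the paper: both extract the coefficient of the hook-shaped monomial symmetric function (your $m_{(n-k,1^k)}$ is the paper's $m_{(k,1^{n-k})}$ after reindexing), observe that a proper coloring of that type is exactly an independent set plus a trivial coloring of the rest, and divide out the factorial counting the orderings of the singleton colors. Your treatment is somewhat more explicit about the biconditional and the degenerate shapes, but the argument is the same.
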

\begin{proof}
Let $G=(V,E)$ with $|V|=n$ be a graph. If $k$ vertices in a proper coloring of $G$ have the same color, then they necessarily are an independent set.  Since the coefficients of the chromatic symmetric function expanded in the monomial basis directly correspond to colorings, the coefficient of the term $m_{(k,1^{n-k})}$ determines the number of independent sets of size $k$ up to a permutation of the colors. So by dividing the coefficient by $(n-k)!$ we get the number of independent sets of size $k$.
\end{proof}

Finally we need some more graph-theoretic results in order to conclude that the chromatic symmetric functions of generalized spiders are unique.
As mentioned in \cite{CS}, the characteristic polynomial of the adjacency matrix of a tree is equal to its matching polynomial. Therefore the result in \cite{LepGut} that non-isomorphic spiders have unique characteristic polynomials implies that they have distinct matching polynomials. Furthermore the matching polynomial of a graph is the independence polynomial of its line graph. This can easily be seen by noting that all edges in a $k$-matching on a graph directly correspond to an independent set of its line graph.  We are now ready to state the main theorem of the section:

\begin{theorem}
No two generalized spiders have the same chromatic symmetric function.
\end{theorem}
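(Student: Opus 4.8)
The plan is to leverage Proposition \ref{SpidLine}, which identifies generalized spiders with line graphs of spiders, and to combine it with the chain of invariant-recovery results assembled just above the statement. First I would recall the setup: let $G_1$ and $G_2$ be generalized spiders with $X_{G_1} = X_{G_2}$. By Proposition \ref{SpidLine} we may write $G_i = L(S_i)$ for spiders $S_i$, where $L(\cdot)$ denotes the line graph. The goal is to show $S_1 \cong S_2$, since then $G_1 = L(S_1) \cong L(S_2) = G_2$ (and line graphs of connected graphs on at least three edges are isomorphic iff the graphs are, by Whitney, which we can invoke or sidestep by noting the $K_n$-body and legs are recoverable).

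The key steps, in order: (1) From $X_{G_1} = X_{G_2}$, use Lemma \ref{IndCSF} to conclude that $G_1$ and $G_2$ have the same independence polynomial $I(x)$. (2) Invoke the fact recalled in the paragraph before the theorem: the matching polynomial $\mu_S(x)$ of a spider $S$ equals the independence polynomial of its line graph $L(S)$, because $k$-matchings of $S$ correspond bijectively to independent sets of size $k$ in $L(S)$. Hence $\mu_{S_1}(x) = I_{G_1}(x) = I_{G_2}(x) = \mu_{S_2}(x)$. (3) Invoke the cited result \cite{CS} that for a tree the characteristic polynomial of the adjacency matrix equals the matching polynomial, together with \cite{LepGut} that non-isomorphic spiders have distinct characteristic polynomials; since $S_1, S_2$ are trees (spiders), $\mu_{S_1} = \mu_{S_2}$ forces $S_1 \cong S_2$. (4) Conclude $G_1 \cong G_2$.

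I expect the main obstacle to be a bookkeeping subtlety rather than a deep one: one must be careful that the correspondence generalized spider $\leftrightarrow$ spider in Proposition \ref{SpidLine} is genuinely well-defined up to isomorphism (recall the stated convention that for $n = 3$ the body must be "un-built" to a claw rather than a $K_3$, and that legs of length one in $S$ get absorbed into the body of $L(S)$). So I would add a sentence verifying that the spider $S$ recovered from a generalized spider $G$ is unique up to isomorphism, so that $G_1 \cong G_2 \iff S_1 \cong S_2$; this closes the loop, since step (3) only gives $S_1 \cong S_2$. A secondary point worth a clause is confirming the degenerate small cases (e.g., very short legs, $n = 3$) do not break the argument — these should be handled by the same convention already fixed in Proposition \ref{SpidLine}. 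Everything else is a direct concatenation of results already in hand, so the proof should be short.
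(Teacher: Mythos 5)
Your proposal follows exactly the same chain as the paper's proof: recover the independence polynomial from $X_G$ via Lemma \ref{IndCSF}, identify it with the matching polynomial of the underlying spider via Proposition \ref{SpidLine}, and invoke \cite{CS} and \cite{LepGut} to conclude that the spider, hence the generalized spider, is determined. Your extra care about the well-definedness of the spider--generalized-spider correspondence (the $n=3$ convention and absorbed legs of length one) is a reasonable refinement of a point the paper leaves implicit, but the argument is the same.
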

\begin{proof}
Let $G$ be a generalized spider. The independence polynomial $I(x)$ of $G$ is unique among generalized spiders, because by Proposition \ref{SpidLine}, $G$ is the line graph of a spider, and spiders have unique matching polynomials. Therefore by Lemma \ref{IndCSF}, $X_G$ is uniquely determined. 
\end{proof}

\section{Horseshoe Crab Graphs}\label{genLol}
As Guay-Paquet showed in \cite{GuayPaq} it is sufficient to show $e$-positivity for the incomparability graphs of $(2+2)$ and $(3+1)$-free posets in order to resolve Stanley's $e$-positivity conjecture. As mentioned in Section \ref{back} these graphs correspond exactly to the natural unit interval graphs (defined below). In the following section we consider the $e$-positivity question for another class of natural unit interval graphs. These graphs we will call horseshoe crab graphs, an allusion to the fact that their appearance resembles that of horseshoe crabs. An illustration can be seen in Figure \ref{fig: Incomp} and a definition is in Definition \ref{hcg}. Note that in biology horseshoe crabs, like spiders, are phylum {\em arthropoda} thus are indeed spider kin. We will use our consideration of horseshoe crabs to explore an even larger class of $e$-positive natural unit interval graphs, which leads to the main conjecture (see Conjecture \ref{theo: m1_m2_m3,n,...,n}) of this section. The technique we use to consider the $e$-positivity of our horseshoe crab graphs was first introduced by Cho and Huh in \cite{ChoHuh}. In our work with these graphs we also generalize their method of weight preserving injections in Lemma \ref{lem: weight shift}.

In order to state and prove our result, we need to provide some further background. First we need to introduce natural unit interval orders. Further information on natural unit interval orders and their properties can be found in Sharesian and Wachs \cite{SharWa}.  A natural unit interval order is a partial ordering $P(\mathbf{m})$ on the elements of $[n]$, which is induced by a sequence $\mathbf{m}=(m_1,\ldots,m_{n-1})$ of positive non-decreasing integers. The sequence must satisfy $i \leq m_i \leq n$ for all $i \in [n-1]$. Then the corresponding order relation $<_P$ is given by
\begin{equation*}
   i <_P j \: \textrm{ if } \: m_i < j \leq n.
\end{equation*}
\begin{df}
The incomparability graph of $P(\mathbf{m})$ is called a $\bf{natural}$ $\bf{unit}$ $\bf{interval}$ $\bf{graph}.$
\end{df}

Using the sequence $\mathbf{m}$ it is easy to characterize the edges of the incomparability graph since, for $i<j$:
\begin{align*}
    i \: \mathrm{and} \: j \: \textrm{are incomparable if and only if} \: m_i \geq j.
\end{align*}
In other words $i$ is incomparable to every $j$ such that $i < j\le m_i$ and each $k$ such that $k<i\le m_k$. 

\begin{df}
Given $\mathbf{m}=(2,m_2,m_3,n,\ldots,n)$, the incomparability graph of $P(\mathbf{m})$ is called a $\bf{ horseshoe}$ $\bf{crab}$ $\bf{graph}$.
\label{hcg}
\end{df}
\noindent
An example of the horseshoe crab graphs can be seen in Figure \ref{fig: Incomp}. Recall now the definition of the chromatic quasisymmetric function as given in Section \ref{back}. For natural unit interval graphs, the chromatic quasisymmetric function has the following properties. This is Theorem 4.5 and Corollary 4.6 of \cite{SharWa}.

\begin{theorem} \label{theo: palindrom}
\cite{SharWa} Let $G=(V,E)$ be a natural unit interval graph. Then
$$ \widetilde{X}_G(t) \in \Lambda[t]$$
so the coefficients of $t^i$ are symmetric functions. Furthermore the coefficients form a palindromic sequences, i.e. $\widetilde{X}_G(t)=t^{|E|}\widetilde{X}_G(t^{-1})$. 
\end{theorem}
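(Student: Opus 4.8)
The plan is to prove the two assertions separately: first that $\widetilde{X}_G(t)$ is symmetric in the $x$-variables, so that each coefficient of $t^i$ lies in $\Lambda$, and then that this sequence of coefficients is palindromic. Since for a general labelled graph $\widetilde{X}_G(t)$ is only quasisymmetric, the symmetry is where the structure of natural unit interval orders must enter; the palindromicity will follow as a short formal consequence. For the symmetry, note that the adjacent transpositions $x_j\leftrightarrow x_{j+1}$ generate all permutations of the variables, so it is enough to show $\widetilde{X}_G(t)$ is fixed by each of them; equivalently, that for every colour-multiplicity vector $\alpha$, every $j$, and every $d$, the number of proper colourings $\kappa$ with $\mathrm{asc}(\kappa)=d$ and multiplicity vector $\alpha$ equals the number with multiplicity vector $s_j\alpha$, where $s_j\alpha$ swaps the $j$-th and $(j{+}1)$-th entries of $\alpha$.

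I would prove this by an explicit involution. Fix $j$ and a proper colouring $\kappa$, and set $U=\kappa^{-1}(\{j,j+1\})$. As an induced subgraph of a natural unit interval graph, $G[U]$ is claw-free and chordal, and it is properly $2$-coloured; hence each of its connected components is a path along which the colours alternate between $j$ and $j+1$, and, using that the closed neighbourhoods in $G$ are intervals of consecutive labels, one checks that each such path, listed in increasing label order $u_1<\cdots<u_m$, has edge set exactly $\{\{u_i,u_{i+1}\}:1\le i<m\}$. Now let $\Phi_j(\kappa)$ be the colouring obtained by interchanging the colours $j$ and $j+1$ on every odd-order component of $G[U]$ and leaving all other vertices unchanged. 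Three checks then finish this part. (i) $\Phi_j$ is a well-defined involution on proper colourings: it only permutes $j$ and $j+1$ within $U$, fixes $U$ setwise, and clearly squares to the identity. (ii) $\Phi_j$ carries $\alpha$ to $s_j\alpha$: an even-order path uses $j$ and $j+1$ equally often, while an odd-order path has one extra occurrence of the colour at its two endpoints, so flipping all odd-order components exchanges the total multiplicities of $j$ and $j+1$. (iii) $\Phi_j$ preserves $\mathrm{asc}$: any edge with at most one endpoint in $U$ is unaffected, because a colour $c\notin\{j,j+1\}$ satisfies $c<j$ or $c>j+1$, so flipping $j\leftrightarrow j+1$ at the $U$-endpoint does not change which endpoint carries the larger colour; any edge with both endpoints in $U$ lies inside a single component, which is recoloured only when that component is odd; and within an odd-order path $u_1<\cdots<u_m$, since $m-1$ is even and the colours alternate, the ascents and descents among its edges are equinumerous, so recolouring the whole path preserves the number of ascents it contributes. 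Hence $\Phi_j$ is the required bijection and $\widetilde{X}_G(t)$ is symmetric.

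For palindromicity, let $\kappa$ be a proper colouring, let $S\subset\mathbb{N}$ be the (finite) set of colours it uses, and let $\rho_S\colon S\to S$ be the unique order-reversing bijection; then $\kappa\mapsto\rho_S\circ\kappa$ is an involution on proper colourings which preserves the partition underlying the colour multiplicities and sends every ascending edge to a descending edge, so $\mathrm{asc}(\rho_S\circ\kappa)=|E|-\mathrm{asc}(\kappa)$ (every edge of a proper colouring being one or the other). By the symmetry just established, the coefficient of $m_\lambda$ in $\widetilde{X}_G(t)$ equals the $t$-generating function of $\mathrm{asc}$ over the colourings of any single fixed multiplicity vector of type $\lambda$; applying the involution to such a family shows this coefficient equals $t^{|E|}$ times its image under $t\mapsto t^{-1}$. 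Summing over $\lambda$ gives $\widetilde{X}_G(t)=t^{|E|}\widetilde{X}_G(t^{-1})$.

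The step I expect to be the main obstacle is the structural description of $G[U]$ underlying $\Phi_j$: pinning down that its components are paths whose vertices occur in label order with strictly alternating colours. The crux is a short lemma stating that a path induced inside a natural unit interval graph cannot have an interior vertex that is a local minimum or maximum in the label order, since such a vertex would — by the consecutive-neighbourhood property of the order — force a chord of the path, producing either a monochromatic edge (contradicting properness) or, with a third neighbour, a claw (contradicting claw-freeness). Once that is in hand, the three verifications for $\Phi_j$ and the colour-reversal argument for palindromicity are routine.
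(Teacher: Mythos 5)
The paper does not prove this statement: it is imported verbatim from Shareshian and Wachs (their Theorem 4.5 and Corollary 4.6), so there is no in-paper argument to compare yours against. Your proof is correct, and it essentially reconstructs the original Shareshian--Wachs argument for symmetry: the key structural fact that the subgraph induced on two consecutive colors $j,j+1$ decomposes into paths whose vertex order agrees with the label order (because an edge $\{a,c\}$ with $a<b<c$ and $b\in U$ would force the triangle $a,b,c$ via the non-decreasing sequence $\mathbf{m}$, contradicting the proper $2$-coloring), followed by the flip of odd components, is exactly their involution, and your three verifications (properness, multiplicity swap, ascent preservation via the even number of alternating edges on an odd path) are all sound. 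Your palindromicity argument via the order-reversing bijection on the set of used colors, which sends $\mathrm{asc}$ to $|E|-\mathrm{asc}$ and is then combined with the already-established symmetry to fix the coefficient of each $m_\lambda$, is also correct and is a clean way to get the functional equation $\widetilde{X}_G(t)=t^{|E|}\widetilde{X}_G(t^{-1})$. The only cosmetic point is that your ``local minimum/maximum forces a claw'' phrasing in the final paragraph is slightly roundabout: the chord already yields a triangle inside a properly $2$-colored induced subgraph, which is an immediate contradiction without invoking claw-freeness.
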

We can now state a necessary and sufficient condition for $\widetilde{X}_G(t)$ being $e$-positive. Let $G$ be a natural unit interval graph and 
$$\widetilde{X}_G(t)=\sum_{i=1}^m f_it^i = \sum_{\lambda \vdash n} E_\lambda(t)e_\lambda, \qquad f_i \in \Lambda.$$
Then $\widetilde{X}_G(t)$ is $e$-positive if and only if the polynomial $E_\lambda(t)$ has non-negative coefficients for all $\lambda$ (see \cite{ChoHuh}).

Schur-positivity for natural unit interval graphs has been proven for $t=1$ by Gasharov in \cite{Gash}.
Shareshian and Wachs extended this result for the quasisymmetric refinement in \cite{SharWa}. In order to state this result and also use it for proving $e$-positivity, we need to define $P$-tableaux, which were used by Gasharov in \cite{Gash} in order to prove Schur-positivity of (3+1)-free posets.

\begin{df}
Let $P$ be a poset on $n$ elements and $\lambda \vdash n$. A $\boldsymbol{P}\bf{-tableau}$ $\bf{of}$ $\bf{shape}$ $\boldsymbol{\lambda}$ is a filling of a Young diagram of shape $\lambda$ (in English notation, i.e. each row is placed below the previous one) with elements of $P$ such that:
\begin{itemize}
    \item Each element of $P$ appears exactly once
    \item The rows are strictly increasing ( $a <_P b$ if $a \in P$ appears to the left of $b \in P$)
    \item The columns are pairwise non-decreasing ($b \not<_P a$ if $a \in P$ appears immediately above $b \in P$)
\end{itemize}
Let $G=(V,E)$ be the incomparability graph of $P$ and $T$ be a $P$-tableau. An edge $\{i,j\}$ with $i<j$ and $i$ appearing above $j$ in $T$ is called a $\boldsymbol{G}\bf{-inversion}$. The number of $G$-inversions is denoted by $\mathrm{inv}_G(T)$ and referred to as the $\bf{weight}$ $\bf{of}$ $\bf{the}$  $\boldsymbol{P}\bf{-tableaux}$. (Definition 6.1 in \cite{SharWa})
\end{df}
\noindent
Equivalently $\mathrm{inv}_G(T)$ can be described as the number of incomparable pairs $(a,b)$ with $a<b$ and $b$ appearing above $a$ in $T$. For example for the sequence $\mathbf{m}=(2,4,6,8,8,8,8)$ a possible $P$-tableau of shape $(3,2,1^3)$ and with weight 5 is shown in Figure \ref{fig:p-tab}. The weight has been calculated as follows:
$$\mathrm{inv}_G=|\{2,3\},\{\{4,5\},\{4,6\},\{4,7\},\{5,7\},\{6,7\}\}|=6.$$
The corresponding natural unit interval graph can be seen in \ref{fig: Incomp}. As it will be useful for a later proof, a general filling of a $P$-tableau is given in Figure \ref{fig: general tab}.
\begin{figure}[h]
\captionsetup[subfigure]{justification=centering}
    \centering
    \begin{subfigure}{0.15\textwidth}
    \centering
    \ytableausetup{boxsize=2em}
    \begin{ytableau}
    1&3&7\\
    2&5\\
    6\\
    4\\
    8
    \end{ytableau}
    \subcaption{A $P$-tableau}
    \label{fig:p-tab}
    \end{subfigure}
    \begin{subfigure}{0.5\textwidth}
    \centering
    \includegraphics[scale=0.5]{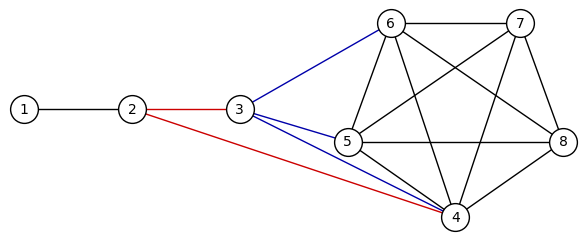}
    \subcaption{The incomparability graph of $P$;\\a horseshoe crab graph} \label{fig: Incomp}
    \end{subfigure}
    \begin{subfigure}{0.2\textwidth}
    \centering
    \ytableausetup{mathmode, boxsize=2.5em}
    \begin{ytableau}
    a_{1,1} & a_{1,2} & a_{1,3} & \dots \\
    a_{2,1} & a_{2,2} & \dots \\
    a_{3,2} & \vdots \\
    \vdots
    \end{ytableau}
    \subcaption{A general $P$-tableau}\label{fig: general tab}
    \end{subfigure}
    \caption{Sequence $\mathbf{m}=(2,4,6,8,8,8,8)$}
\end{figure}

With this definitions we can now state a very important result which is Theorem 6.3 in \cite{SharWa}.
\begin{theorem}\label{theo: s-pos}
\cite{SharWa} For a natural unit interval order $P$ the chromatic quasisymmetric function of its incomparability graph $G$ has the following expansion:
$$\widetilde{X}_G(t)=\sum_T t^{\mathrm{inv}_G(T)}s_{\lambda(T)},$$
where the sum is over all $P$-tableaux and $\lambda(T)$ denotes the shape of $T$. Hence $\widetilde{X}_G(t)$ is Schur-positive. 
\end{theorem}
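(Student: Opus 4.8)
The plan is to obtain this as a $t$-refinement of Gasharov's theorem that $X_G=\sum_T s_{\lambda(T)}$ for incomparability graphs of $(3+1)$-free posets, so the skeleton of the argument is Gasharov's, with the ascent statistic carried through it. First I would reduce the claim to an identity of monomial coefficients. Expanding each Schur function as $s_\mu=\sum_\nu K_{\mu\nu}m_\nu$ in terms of Kostka numbers, the claimed identity is equivalent to the statement that for every monomial $x^\nu$ the coefficients on the two sides agree. On the left the coefficient of $x^\nu$ is $\sum_\kappa t^{\mathrm{asc}(\kappa)}$, the sum over proper colorings $\kappa$ whose color class of each color $c$ has size $\nu_c$; on the right it is $\sum_{(T,U)}t^{\mathrm{inv}_G(T)}$, the sum over pairs in which $T$ is a $P$-tableau and $U$ is a semistandard Young tableau of shape $\lambda(T)$ and content $\nu$. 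So it suffices to produce, for each fixed $\nu$, a bijection $\kappa\mapsto(T,U)$ with $\mathrm{content}(U)=\nu$ (so the monomials match) and $\mathrm{asc}(\kappa)=\mathrm{inv}_G(T)$.

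For the bijection itself I would use Gasharov's insertion. Since $G$ is the incomparability graph of $P$, each color class $\kappa^{-1}(c)$ is a stable set of $G$, hence a chain of $P$; reading the color classes in increasing order of $c$ and inserting each such chain by Gasharov's Schensted-type procedure produces an insertion tableau $T$ filled with the elements of $P$ and a recording tableau $U$ filled with the colors. That this is a bijection onto the pairs described above, that $T$ is a genuine $P$-tableau, that $U$ is semistandard of the same shape, and that $\mathrm{content}(U)$ records the color multiplicities of $\kappa$, is precisely the $t=1$ statement of \cite{Gash}, which I would invoke rather than reprove.

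The new ingredient, and the step I expect to be the main obstacle, is showing that this insertion sends the ascent statistic to the $P$-tableau inversion statistic, i.e.\ $\mathrm{asc}(\kappa)=\mathrm{inv}_G(T)$. An ascending edge of $\kappa$ is a pair $\{i,j\}$ with $i<j$, with $i$ and $j$ incomparable in $P$, and with $\kappa(i)<\kappa(j)$ — equivalently, $i$ lies in a chain inserted strictly before the chain containing $j$. I would trace, bump by bump, what happens to such a pair while $j$'s chain is being inserted, and argue that in the final tableau $T$ the smaller label $i$ ends up strictly above $j$ (contributing exactly one $G$-inversion), whereas no incomparable pair that is not an ascending edge of $\kappa$ ever becomes a $G$-inversion. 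The delicate point is that a Schensted-type insertion moves entries between rows, so one must show that for an incomparable pair $i<j$ the relative vertical position in $T$ is controlled monotonically by the order in which the two chains were inserted; here the interval structure of a natural unit interval order on $[n]$ (incomparability of $i<j$ being the condition $j\le m_i$) is exactly what should force this monotonicity, and making it precise is the heart of the proof.

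Once the statistic match is in hand, Schur-positivity is immediate, since each $t^{\mathrm{inv}_G(T)}$ has nonnegative integer coefficients; and the expansion is well-posed because $\widetilde{X}_G(t)\in\Lambda[t]$ by Theorem \ref{theo: palindrom} and the $s_\lambda$ form a basis of $\Lambda$. As a fallback, if tracking ascents through Gasharov's insertion is too unwieldy, I would instead first establish a Gessel fundamental quasisymmetric expansion $\widetilde{X}_G(t)=\sum_\sigma t^{\mathrm{inv}_G(\sigma)}F_{\mathrm{Des}(\sigma)}$, summing over permutations obtained by standardizing colorings, and then convert it to the Schur expansion via a poset-compatible Robinson--Schensted correspondence; the obstruction there is that $P$-tableaux are not standard Young tableaux — their columns increase only weakly in the poset order — so classical RSK does not apply verbatim and one must build a modified insertion and verify that it still intertwines the two inversion statistics. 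Either way, the crux is the compatibility of a Schensted-type insertion with both inversion statistics.
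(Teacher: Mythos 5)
The paper does not prove this statement at all: it is quoted verbatim, with citation, as Theorem~6.3 of Shareshian and Wachs \cite{SharWa}, so there is no in-paper argument to compare against. Judged on its own terms, your proposal has a genuine gap, and it sits exactly where you locate ``the heart of the proof.'' First, the scaffolding you want to ``invoke rather than reprove'' does not exist in \cite{Gash}: Gasharov's proof of $X_G=\sum_T s_{\lambda(T)}$ is not a Schensted-type insertion producing a bijection $\kappa\mapsto(T,U)$ between proper colorings and pairs (a $P$-tableau, a semistandard recording tableau). It is a sign-reversing involution on $P$-arrays (sequences of chains), obtained after expanding via the inverse Kostka matrix, whose fixed points are the $P$-tableaux. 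There is no off-the-shelf ``Gasharov insertion'' whose $t=1$ correctness you can cite; an RSK-style correspondence for unit interval orders does exist in the literature (Sundquist--Wagner--West), but it is a separate construction and nothing is known about its compatibility with the statistics $\mathrm{asc}$ and $\mathrm{inv}_G$.

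Second, even granting some insertion algorithm, the equality $\mathrm{asc}(\kappa)=\mathrm{inv}_G(T)$ is asserted but not argued: you say you would ``trace, bump by bump'' and show that for an incomparable pair $i<j$ the relative vertical position in $T$ is controlled monotonically by the insertion order of their chains, but no mechanism is given, and this is precisely the step that is not known to work. A weight-preserving bijective proof of the Shareshian--Wachs Schur expansion is not a routine refinement; the actual proof in \cite{SharWa} avoids it by carrying $t^{\mathrm{inv}_G}$ through Gasharov's involution and verifying that the involution preserves $\mathrm{inv}_G$ on the non-fixed $P$-arrays. Your reduction to matching monomial coefficients is fine (both sides lie in $\Lambda[t]$ by Theorem~\ref{theo: palindrom}, and the coefficient of $x^\nu$ on the right is indeed a sum over pairs $(T,U)$ via Kostka numbers), and the final deduction of Schur-positivity is immediate once the expansion is known; but as written the proposal replaces the one hard step by an unproved claim, so it does not constitute a proof. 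If you want a self-contained argument, the viable route is the involution: set up the signed sum over $P$-arrays weighted by $t^{\mathrm{inv}_G}$ and check that Gasharov's pairing is weight-preserving.
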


From this formula, the corresponding expansions in the elementary basis can be found using the dual Jacobi-Trudi identity. Furthermore Shareshian and Wachs proved positivity of a certain coefficient by giving an explicit formula, which leads to the following theorem which is Corollary 7.2 in \cite{SharWa}.

\begin{theorem}\label{theo: E_n pos}
\cite{SharWa} Let $G$ be a natural unit interval graph on $n$ elements and $\widetilde{X}_G(t)=\sum_{\lambda \vdash n}E_\lambda(t)e_\lambda$. Then $E_{(n)}(t)$ is a positive polynomial in $t$. 
\end{theorem}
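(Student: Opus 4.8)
The plan is to route everything through the Shareshian--Wachs Schur expansion of Theorem \ref{theo: s-pos} and then extract the single coefficient $E_{(n)}(t)$ with the dual Jacobi--Trudi identity. The only symmetric-function input needed is the coefficient of $e_{(n)}$ in the $e$-expansion of a Schur function. Expanding the determinant $s_\lambda=\det(e_{\lambda'_i-i+j})$, or equivalently using $[e_{(n)}]\,s_\lambda=\langle s_\lambda,\omega(p_n)\rangle=(-1)^{n-1}\chi^\lambda$ evaluated on an $n$-cycle together with the Murnaghan--Nakayama rule, one sees that $s_\lambda$ contributes to $e_{(n)}$ only when $\lambda$ is a hook, with
$$[e_{(n)}]\,s_{(n-k,1^k)}=(-1)^{\,n-1-k},\qquad [e_{(n)}]\,s_\lambda=0\ \text{ for non-hook }\lambda.$$
Feeding this into Theorem \ref{theo: s-pos} and collecting the $P$-tableaux by shape turns the statement into a purely combinatorial one: writing $r(T)=\lambda_1(T)-1$ for the arm length of a hook-shaped $P$-tableau $T$ of the poset underlying $G$,
$$E_{(n)}(t)=\sum_{T\ \text{a hook }P\text{-tableau}}(-1)^{r(T)}\,t^{\operatorname{inv}_G(T)},$$
so the task is to show this signed, $t$-graded enumeration of hook $P$-tableaux has nonnegative coefficients.

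I would prove this in the manner of Cho--Huh \cite{ChoHuh}: construct, for every hook-shaped $P$-tableau of ``wrong'' sign, a weight-preserving injection into the set of $P$-tableaux of hook shapes of ``right'' sign --- a map that changes the arm length by one (hence flips the sign) while exactly preserving $\operatorname{inv}_G$ --- arranged so that the injections between them account for \emph{all} wrong-sign tableaux. Cancelling each source against its image then collapses $E_{(n)}(t)$ to a sum of $t^{\operatorname{inv}_G(T)}$ over the like-signed tableaux not in any image, which is a nonnegative polynomial; its palindromicity is then automatic from Theorem \ref{theo: palindrom}. Concretely the injection should have the shape: move one carefully chosen cell from the arm of the hook into the leg (or vice versa) and then re-sort the leg so that the result is again a valid $P$-tableau.

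The technical heart, and the main obstacle, is the ``weight-preserving'' clause. The naive move --- slide the last cell of the arm down to the foot of the leg --- does \emph{not} preserve $\operatorname{inv}_G$: a vertex that drops from the first row to the last row trades the $G$-inversions in which it is the upper (larger) vertex for those in which it is the lower (smaller) one, and these two counts differ in general. Repairing this --- choosing which cell to move and how to permute the remaining leg entries so that the net change in $\operatorname{inv}_G$ is zero, the target is a genuine $P$-tableau, and the map stays injective and reversible --- is exactly the bookkeeping packaged in Cho--Huh's weight-preserving injections and its generalisation in Lemma \ref{lem: weight shift}; showing that such a correction exists uniformly and leaves nothing uncancelled except the intended positive family is where the real work lies. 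It is worth testing the construction on small cases: for $G=K_n$ every $P$-tableau is already a single column, so $r(T)\equiv0$, nothing cancels, and the formula reduces immediately to $E_{(n)}(t)=[n]_t!$, which both checks the sign conventions and hints at the form of the surviving family of tableaux.
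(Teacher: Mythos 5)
This theorem is not proved in the paper at all: it is quoted as Corollary 7.2 of Shareshian and Wachs \cite{SharWa}, who obtain it from an explicit formula for $E_{(n)}(t)$ as a generating function over acyclic orientations of $G$ with a unique sink (a $t$-refinement of Stanley's sink theorem), not by cancellation in the Schur expansion. So there is no in-paper proof to match your argument against, and the reference proof follows a genuinely different route from the one you propose. Your reduction itself is correct and cleanly done: $[e_{(n)}]s_\lambda$ vanishes unless $\lambda$ is a hook, equals $(-1)^{n-1-k}$ on $(n-k,1^k)$, and hence by Theorem \ref{theo: s-pos} one gets $E_{(n)}(t)=\sum_T(-1)^{r(T)}t^{\mathrm{inv}_G(T)}$ over hook-shaped $P$-tableaux; the $K_n$ check giving $[n]_t!$ is also right.

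The genuine gap is that the entire content of the theorem is concentrated in the step you defer: the sign-reversing, weight-preserving injection from odd-arm hook tableaux into even-arm hook tableaux is asserted to exist but never constructed, and you yourself observe that the obvious candidate (sliding the last arm cell into the leg) changes $\mathrm{inv}_G$. Unlike the injections the paper does construct for $E_{(n-2,1^2)}$ and $E_{(n-3,3)}$ --- where the elements $1,2,3$ rigidly control the tableau shapes and a single explicit move suffices --- here $G$ is an arbitrary natural unit interval graph, the hooks range over all arm lengths, and no uniform cell-move is exhibited or verified to be injective, shape-valid, and weight-preserving. Note also that if you tried to weaken ``weight-preserving'' to ``constant weight shift'' and invoke Lemma \ref{lem: weight shift}, you would be importing Schur-unimodality (Brosnan--Chow / Guay-Paquet), which is far heavier machinery than the statement being proved. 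As it stands the proposal is a correct reformulation plus a plan, not a proof; to complete it you would either need to actually build the hook injection (nontrivial, and not known to me to be in the literature in this form) or fall back on the Shareshian--Wachs acyclic-orientation formula, which is what the paper does by citation.
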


Having established all these results, we can now use the technique of weight preserving injections to consider $e$-positivity of a class of natural unit interval orders corresponding to a certain sequence.
This leads on from the work of Cho and Huh \cite{ChoHuh} who established a number of significant results including the following Remark \ref{rem: known}.  Like Cho and Huh, our work centres around a body with an appendage (the ``tail'' of the horseshoe crab in our case) and it would be interesting to further explore which combinations of bodies and appendages are $e$-positive.

\begin{remark}\label{rem: known}
\begin{itemize}
    \item Let $\mathbf{m}=(m_1,m_2,n,\ldots,n)$. For the incomparability graph $G$ of the natural unit interval order $P(\mathbf{m})$, $\widetilde{X}_G(t)$ is $e$-positive. Therefore $X_G$ is $e$-positive. \cite{SharWa}
    \item Let $\mathbf{m}=(r,m_2,m_3,\ldots,m_r,n,\ldots,n)$ and $G$ be the incomparability graph of $P(\mathbf{m})$. Then $\widetilde{X}_G(t)$ and thus $X_G$ is $e$-positive. \cite{ChoHuh}
\end{itemize}
\end{remark}

\begin{conj}\label{theo: 2,m2,m3}
Let $\mathbf{m}=(2,m_2,m_3,n,\ldots,n)$  and $G$ be the incomparability graph of $P(\mathbf{m})$. Then $\widetilde{X}_G(t)$ and thus $X_G$ is $e$-positive.
\end{conj}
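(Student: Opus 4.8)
The plan is to prove $e$-positivity of $\widetilde{X}_G(t)$ directly from the Schur expansion of Theorem~\ref{theo: s-pos}, exploiting a structural collapse: I first show that the independence number of a horseshoe crab graph is at most $3$. Indeed, the body $\{4,\dots,n\}$ is a clique (all $m_i=n$ there), so any independent set meets it in at most one vertex, while among $\{1,2,3\}$ the edge $\{1,2\}$ forced by $m_1=2$ prevents all three from being independent together; hence $\alpha(G)\le 3$. Since each row of a $P$-tableau is a chain of $P$, i.e.\ an independent set of $G$, every shape occurring in $\widetilde{X}_G(t)=\sum_T t^{\mathrm{inv}_G(T)}s_{\lambda(T)}$ satisfies $\lambda_1\le 3$. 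Dually, the dual Jacobi--Trudi identity $s_\lambda=\det(e_{\lambda'_i-i+j})$ uses a matrix of order equal to the number of columns $\lambda_1\le 3$, so the entire $e$-expansion is assembled from determinants of order at most three and only $e_\mu$ with $\ell(\mu)\le 3$ can appear.

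This reduction is what makes the \emph{full} result reachable rather than merely the ``all but one'' version: the $e$-expansion is governed by the finitely many shape families $\mu\in\{(n),(a,b),(a,b,c)\}$. The next step is to enumerate the $P$-tableaux. Because $P(\mathbf{m})$ is a total order on $\{4,\dots,n\}$ and the only incomparabilities involve the tail $\{1,2,3\}$ and the two thresholds $m_2,m_3$, the tableaux of each admissible shape are pinned down by the placement of a bounded number of ``special'' entries; the generating polynomials $C_\lambda(t)=\sum_{\lambda(T)=\lambda}t^{\mathrm{inv}_G(T)}$ come out as explicit polynomials in $t$ with coefficients polynomial in $n,m_2,m_3$. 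Pushing these through the order-$\le 3$ determinants and collecting yields closed, if unwieldy, formulas $\widetilde{X}_G(t)=\sum_\mu E_\mu(t)e_\mu$.

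To establish $E_\mu(t)\in\mathbb{N}[t]$ for every $\mu$ --- the criterion for $e$-positivity stated just after Theorem~\ref{theo: palindrom} --- I would set up an $\mathrm{inv}$-preserving sign-reversing involution on the signed objects produced by expanding each small Jacobi--Trudi determinant, generalizing the weight-preserving injections of Cho and Huh recorded in Lemma~\ref{lem: weight shift}: each negative contribution (from a non-identity permutation in a determinant, feeding some $e_\mu$) is matched with a positive one of equal $t$-weight from another shape. For $\mu=(n)$ positivity is already furnished by Theorem~\ref{theo: E_n pos}, and for the generic two- and three-part $\mu$ the cancellation should follow the second case of Remark~\ref{rem: known}: our graph is obtained from the Cho--Huh sequence $\mathbf{m}=(3,m_2,m_3,n,\dots,n)$ by deleting the single edge $\{1,3\}$, so their scheme can be adapted with only local modifications to the affected $\mathrm{inv}$-weights.

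The hard part will be the single shape $\mu^\ast$ at which this involution fails to be surjective --- exactly the coefficient the partial argument leaves open. My approach to close it is to abandon the involution there and instead compute $E_{\mu^\ast}(t)$ outright: the at-most-three-column constraint keeps the relevant $P$-tableaux few enough to enumerate exactly, producing $E_{\mu^\ast}(t)$ as an explicit palindromic polynomial in $t$ with coefficients in $n,m_2,m_3$. Palindromicity from Theorem~\ref{theo: palindrom} then halves the verification, reducing nonnegativity to checking the coefficients up to the central degree, which becomes a finite collection of elementary inequalities in the ordered parameters $2\le m_2\le m_3\le n$. I expect these inequalities to be the genuine crux, since it is precisely the simultaneous dependence on both free thresholds $m_2$ and $m_3$ that obstructs a uniform cancellation and forces the weaker statement; the payoff of the independence-number collapse is that it renders this last coefficient finite and explicit enough to settle by hand.
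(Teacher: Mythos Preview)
Your overall framework coincides with the paper's: both use Theorem~\ref{theo: s-pos}, observe that only finitely many shapes occur (the paper lists exactly $(1^n),(2,1^{n-2}),(2^2,1^{n-4}),(2^3,1^{n-6}),(3,1^{n-3}),(3,2,1^{n-5})$), apply dual Jacobi--Trudi, and then try to show each $E_\mu(t)\ge 0$ via weight-preserving injections. Note, however, that the paper presents only a \emph{partial} argument: the statement is recorded as a Conjecture, and the paper proves positivity of every coefficient except $E_{(n-1,1)}(t)$, which it leaves open. So your proposal is not being measured against a complete proof.

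That said, your plan contains a genuine structural error and a genuine gap. First, the claim that ``$P(\mathbf{m})$ is a total order on $\{4,\dots,n\}$'' is false: since $m_i=n$ for $i\ge 4$, every pair in $\{4,\dots,n\}$ is incomparable in $P$, so this set is an \emph{antichain}, not a chain (equivalently, it is a clique in $G$, as you correctly said a paragraph earlier). This matters for your enumeration sketch: the first column of a $P$-tableau can hold these antichain elements in essentially any order, so the tableaux are \emph{not} ``pinned down by the placement of a bounded number of special entries'' in the way your formulation suggests. The paper sidesteps explicit enumeration precisely by building injections between $\mathcal{T}_\lambda$'s that only rearrange a few cells and are manifestly $\mathrm{inv}$-preserving (or, for $E_{(n-3,3)}$, weight-shifting by a constant via Lemma~\ref{lem: weight shift}).

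Second, your treatment of the hard coefficient is not a proof but a hope. You correctly anticipate that one $\mu^\ast$ resists the injection scheme --- in the paper this is $\mu^\ast=(n-1,1)$ --- but ``compute $E_{\mu^\ast}(t)$ outright and check the halved list of inequalities in $n,m_2,m_3$'' is exactly the step no one has carried out; the paper's Section~\ref{future} flags it as the open obstruction. Until you actually produce $E_{(n-1,1)}(t)$ as an explicit polynomial and verify the inequalities, your proposal does not go beyond the paper's partial result. If you do pursue this route, be aware that the dependence of $\mathrm{inv}_G$ on both $m_2$ and $m_3$ enters through which of the body vertices are adjacent to $2$ and $3$, so the resulting polynomial is genuinely two-parameter and the inequality check is not a triviality.
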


\begin{proof}[Partial proof.]
\renewcommand\qedsymbol{}
We show all but one of the coefficients are positive. First note we may assume this graph is connected, as we have a clique of size $(n-3)$, so as the chromatic quasi-symmetric function (CQSF) is the product the CQSFs of its connected components any disconnected graph has a CQSF written as the product of the CQSF of a complete graph and the CQSF of a 3 vertex claw-free incomparability graph. It follows from Remark \ref{rem: known} that the former is $e$-positive and all cases in the latter are easily computed. We are then done as the product of $e$-positive functions is clearly $e$-positive. So we then have that the graph is connected and hence $2\not<_P 3$, $3\not<_P4$. Let $G$ denote the incomparability graph of $P(\mathbf{m})$ on $n$ elements with $\mathbf{m}=(2,m_2,m_3,n,\ldots,n)$. As the only non-maximal elements of $P(\mathbf{m})$ are 1, 2, and 3, there are at most 3 rows of length greater than 1. Since a maximal chain in $\nat$ has at most 3 elements, the first row has length at most 3. Since any 3-chain starts with 1 and 3, the presence of the 3-chain leaves room for at most one other 2-chain, leaving the possible shapes as:
$$(1^n),(2,1^{n-2}),(2^2,1^{n-4}),(2^3,1^{n-6}),(3,1^{n-3}),(3,2,1^{n-5}).$$
This leads to following expansion of the chromatic quasisymmetric function by Theorem \ref{theo: s-pos}, where we use $S$ to denote the appropriate coefficient:
\begin{align*}
\widetilde{X}_G(t) &= S_{(1^n)}(t)s_{(1^n)}+S_{(2,1^{n-2})}(t)s_{(2,1^{n-2})}+S_{(2^2,1^{n-4})}(t)s_{(2^2,1^{n-4})}\\
&+S_{(2^3,1^{n-6})}(t)s_{(2^3,1^{n-6})}+S_{(3,1^{n-3})}(t)s_{(3,1^{n-3})}+S_{(3,2,1^{n-5})}(t)s_{(3,2,1^{n-5})}.
\end{align*}
By applying dual Jacobi-Trudi identity, the coefficients for elementary symmetric functions can be calculated:
\begin{align}
    \setcounter{equation}{0}
    E_{(n)}(t) &= S_{(1^n)}(t)+S_{(3,1^{n-3})}(t)-S_{(2,1^{n-2})}(t)\label{coef: cov theo}\\
    E_{(n-1,1)}(t) &= S_{(2,1^{n-2})}(t)+S_{(3,2,1^{n-5})}(t)-S_{(2^2,1^{n-4})}(t)-S_{(3,1^{n-3})}(t)\label{coef: 2 in 2}\\
    E_{(n-2,2)}(t) &= S_{(2^2,1^{n-4})}(t)-S_{(2^3,1^{n-6})}(t)-S_{(3,1^{n-3})}(t)\label{coef: 2 in 1}\\
    E_{(n-2,1^2)}(t) &= S_{(3,1^{n-3})}(t)-S_{(3,2,1^{n-5})}(t)\label{coef: 1 in 1 easy}\\
    E_{(n-3,3)}(t) &= S_{(2^3,1^{n-6})}(t) -S_{(3,2,1^{n-5})}(t)\label{coef: 1 in 1 hard}\\
 E_{(n-3,2,1)}(t) &= S_{(3,2,1^{n-5})}(t) 
    \label{coef: triv}
\end{align}
In order to prove the $e$-positivity of $G$ it is sufficient to show that the coefficients of all the polynomials
$E_\lambda(t)$ are positive. We will show all coefficients expect $E_{(n-1,1)}$ are positive. By Theorem \ref{theo: E_n pos}, $E_{(n)}(t)$ is a positive polynomial. Furthermore since by Theorem \ref{theo: s-pos}
\begin{equation}\label{coef: s}
S_\lambda(t)=s_\lambda \cdot \sum_{T \in \mathcal{T}_{\lambda}}t^{\mathrm{inv}_G(T)}, 
\end{equation}
where $\mathcal{T}_{\lambda}$ denotes the set of $P$-tableaux of shape $\lambda$, the coefficient $E_{(n-3,2,1)}(t)$ in (\ref{coef: triv}) is trivially positive. For the remaining coefficients, positivity can be shown by the use of weight preserving injections. Those will be defined for the individual cases and the notation of the general $P$-tableau in Figure \ref{fig: general tab} will be used throughout:
\begin{itemize}
   \item  $\mathbf{E_{(n-2,2)}(t)}$: In order to define a weight preserving injection
    $$ \xi: \mathcal{T}_{(2^3,1^{n-6})} \cup \mathcal{T}_{(3,1^{n-3})} \to \mathcal{T}_{(2^2,1^{n-4})} $$
    a few cases need to be considered. For $T \in \mathcal{T}_{(2^3,1^{n-6})}$ a straightforward mapping rule is moving the $a_{3,2}$ element exactly above the $a_{4,1}$ element. Since 1,2 and 3 are the only possible ways to start a 2-chain in $\nat$, this does not violate being pairwise non-decreasing along the columns. Furthermore it is trivially weight preserving as the relative positions of the incomparable elements do not change. Note that in the image of $\mathcal{T}_{(2^3,1^{n-6})}$ under $\xi$,  $(a_{1,1},a_{2,1},a_{3,1})$ forms a certain permutation of $(1,2,3)$. Now consider $P$-tableaux of shape $(3,1^{n-3})$. Note that we always have $a_{1,1}=1$ and $a_{1,2}=3$ as all 3-chains start like this. The tableaux of this shape are mapped as follows:
    \begin{itemize}

        \item $a_{2,1} \neq 2, a_{3,1} \neq 2$, and $3 \not<_P a_{2,1}$: By moving the $a_{2,1}$ up next to the 1 and dropping $3,a_{1,3}$ to form the new second row a $P$-tableau of shape $(2^2,1^{n-4})$ is obtained. Note that this transformation fixes the first element of the third row, which was stipulated not to be 2. It is also not 1 or 3 as these were sent to the second row. We then conclude it is not in the image of $\mathcal{T}_{(2^3,1^{n-6})}$ as we saw the first element of the first 3 rows are 1, 2, and 3 (not necessarily in that order). We gain an inversion from $\{3,a_{2,1}\}$ and since  $3 \not<_P a_{2,1}$ and $3<_P a_{1,3} \implies a_{2,1}<a_{1,3}$ we lose one from $\{a_{2,1},a_{1,3}\}$ .
        \item $a_{2,1} \neq 2, a_{3,1} \neq 2$, and $3 <_P a_{2,1}$: This case can be solved by simply moving the 3 in front of the $a_{2,1}$ element. This does not add or subtract any $G$-inversions because $3<_P a_{2,1}$. This can not appear in the image of $\mathcal{T}_{(2^3,1^{n-6})}$ as $a_{2,1} \neq 2$ and $a_{3,1} \neq 2$. This does not overlap with the previous case as the second element in the first row of the image in the previous case was stipulated to be incomparable with 3, and here it is $<_P3$.
        \item $a_{3,1}=2$: We move 2 to the start of the second row, move $a_{1,3}$ in front of the 2, and drop $a_{2,1}$ to form the 3rd row. These do not overlap with the other cases as this is the only case where the second element of the first row is a 3. We lose the $\{3,a_{2,1}\}$ inversion (these are in fact incomparable as $a_{2,1}$ appears below 2 so it is incomparable with 2 and certainly incomparable with 3) and gain the $\{a_{1,3},a_{2,1}\}$ inversion ($a_{1,3}>_P 3$ since it appears to the right of it, and we just established $2\not<_P a_{2,1}$ so $a_{1,3}>a_{2,2}$).
        \item $a_{2,1}=2$: We map 2 to be the first entry of the first row, $a_{1,3}$ to be the second entry of the first row, and 1 and 3 to be the first and second entries of the second row respectively. We leave all other entries in their original place. We gain a $\{1,2\}$ inversion and lose the $\{2,3\}$ inversion. $a_{1,3}$ does not affect the weight as $3<_P a_{1,3}$.
       \end{itemize}

    \begin{figure}
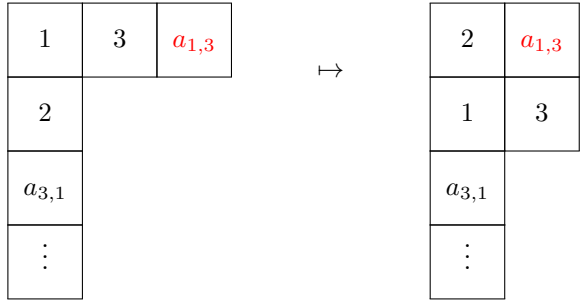

        \centering
        \begin{tabular}{c c c}
            \ytableausetup{mathmode, boxsize=2.75em}
             \begin{ytableau}
                1 & 3 & \color{red}a_{1,3}\\
                2 \\
                a_{3,1}\\
                \vdots
            \end{ytableau}&
            \qquad $\mapsto$ &
            \qquad
            \begin{ytableau}
            2  & \color{red}a_{1,3}\\
            1 & 3 \\
            a_{3,1}\\
            \vdots
        \end{ytableau}
        \end{tabular}
        \caption{Part of the map $\xi$}
        \label{fig: 2 in 1 last case}
    \end{figure}
    
    \item $\mathbf{E_{(n-2,1^2)}(t)}$: Recalling equation (\ref{coef: 1 in 1 easy}), $\mathcal{T}_{(3,2,1^{n-5})}$ needs to be injected into $\mathcal{T}_{(3,1^{n-3})}$. As mentioned before the $P$-tableaux of shape $(3,2,1^{n-5})$ have a lot of structure due to the fact that 1,2 and 3 are the only non-maximal elements of $\nat$. Therefore a very straightforward weight-preserving injection $\eta$ is illustrated in Figure \ref{fig: 1 in 1 easy}. Note that the image is a $P$-tableau as $a_{2,2},a_{3,1}\ge 4$ and hence $a_{3,1} \not<_P a_{2,2}$. Furthermore as the relative positions of the elements did not change, the number of inversions is the same for both $P$-tableaux. For injectivity consider $T_1,T_2 \in \mathcal{T}_{(3,2,1^{n-5})}$ with $\eta(T_1)=\eta(T_2)$. Since the original $P$-tableau can be recovered by moving the element directly underneath the 2 up, it follows that $T_1=T_2$. Thus $\eta$ is injective.  
    \begin{figure}[ht]
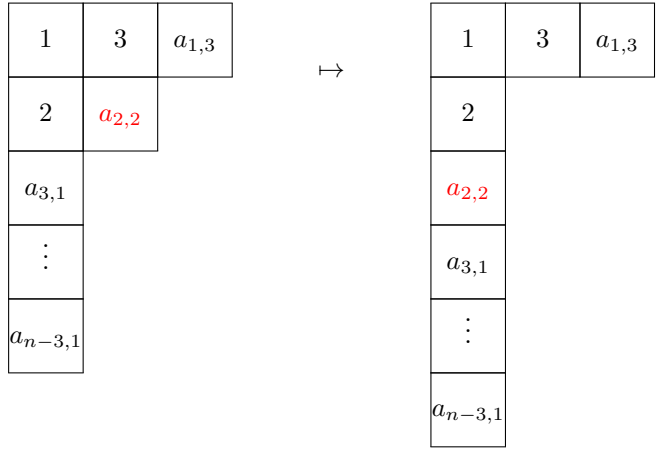

        \centering
        \begin{tabular}{c c c}
            \ytableausetup{mathmode, boxsize=2.75em}
             \begin{ytableau}
                1 & 3 & a_{1,3}\\
                2 & \color{red}a_{2,2}\\
                a_{3,1}\\
                \vdots\\
                a_{n-3,1}
            \end{ytableau}&
            \qquad $\mapsto$ &
            \qquad
            \begin{ytableau}
            1 & 3 &a_{1,3}\\
            2 \\
           \color{red}a_{2,2}\\
            a_{3,1}\\
            \vdots\\
            a_{n-3,1}
        \end{ytableau}
             
        \end{tabular}
        \caption{The map $\eta$}
        \label{fig: 1 in 1 easy}
    \end{figure}
\end{itemize}
\end{proof}

For the last coefficient $E_{(n-3,3)}(t)$ we need to establish some additional results in order to use the method of injections. First of all we define Schur-unimodality for quasisymmetric functions.

\begin{df}
Let $G$ be a graph and $\widetilde{X}_G(t)=\sum_{i=0}^{m} f_i t^i$ for $f_i \in \Lambda$. $\widetilde{X}_G(t)$ is said to be $\bf{Schur-unimodal}$ if $f_{i+1}-f_i$ is Schur-positive whenever $0 \leq i \leq \frac{m-1}{2}$.
\end{df}

The next theorem gives a powerful result involving Schur-unimodality. The result that leads to this was first conjectured by Shareshian and Wachs in \cite{SharWa}, then proved by Brosnan and Chow in \cite{BroCho} and later by Guay-Paquet \cite{Guay2Paq} using a different method.

\begin{theorem}
\cite{BroCho} \cite{Guay2Paq} Let $G$ be a natural unit interval graph. Then $\widetilde{X}_G(t)$ is Schur-unimodal.
\end{theorem}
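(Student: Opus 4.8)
The plan is to obtain Schur-unimodality as a corollary of the Shareshian--Wachs conjecture (now a theorem) together with the Hard Lefschetz theorem, so the substantive input is geometric. First I would recall the dictionary: to the natural unit interval order $\nat$ one attaches a Hessenberg function $h\colon[n]\to[n]$ (here $h(i)=m_i$ for $i<n$ and $h(n)=n$) and the associated regular semisimple Hessenberg variety $\mathrm{Hess}(h)$ inside the full flag variety of $\mathbb{C}^n$. This is a smooth projective variety whose complex dimension is $m=\sum_i (h(i)-i)=|E(G)|$, with cohomology concentrated in even degrees; Tymoczko's dot action makes $H^{\ast}(\mathrm{Hess}(h);\mathbb{C})=\bigoplus_{j=0}^{m}H^{2j}$ into a graded $S_n$-representation. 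The Shareshian--Wachs conjecture, proved by Brosnan--Chow \cite{BroCho} and independently by Guay-Paquet \cite{Guay2Paq}, identifies $\omega\widetilde{X}_G(t)=\sum_{j=0}^{m}\mathrm{ch}\,H^{2j}(\mathrm{Hess}(h))\,t^j$, where $\mathrm{ch}$ is the Frobenius characteristic and $\omega$ the standard involution on $\Lambda$. So, writing $\widetilde{X}_G(t)=\sum_i f_i t^i$ as in the definition of Schur-unimodality, one has $f_i=\omega\,\mathrm{ch}\,H^{2i}$.

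The main step is to upgrade Hard Lefschetz to an $S_n$-equivariant statement. Because $\mathrm{Hess}(h)$ is smooth and projective, ordinary Hard Lefschetz supplies a degree-two class $L$ with $L^{m-2j}\colon H^{2j}\xrightarrow{\sim}H^{2m-2j}$ an isomorphism for $2j\le m$; in particular $L\colon H^{2j}\to H^{2j+2}$ is injective for $2j<m$. The difficulty --- and I expect this to be the crux --- is that the dot action is not induced by algebraic automorphisms of $\mathrm{Hess}(h)$, so one cannot simply quote equivariant Hard Lefschetz for a finite group acting on a variety. I would follow Brosnan--Chow and realize the dot representation as the monodromy of $\pi_1$ of the regular locus of $\mathfrak{gl}_n$ acting on the cohomology of a fibre of the universal family of regular Hessenberg varieties; the relative Hard Lefschetz theorem (equivalently the decomposition theorem) for this proper smooth morphism produces a relative Lefschetz operator that automatically commutes with the $\pi_1$-monodromy, hence with the $S_n$-action. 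This yields an $S_n$-equivariant $L$, so $L\colon H^{2j}\to H^{2j+2}$ is a monomorphism of $S_n$-modules for $2j<m$.

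Granting the equivariant operator, the conclusion is immediate. For each $j$ with $2j<m$ there is a short exact sequence of $S_n$-modules $0\to H^{2j}\xrightarrow{L}H^{2j+2}\to C_j\to 0$, so $\mathrm{ch}\,H^{2j+2}-\mathrm{ch}\,H^{2j}=\mathrm{ch}\,C_j$ is a nonnegative integer combination of Schur functions. Since $\omega$ sends $s_\lambda\mapsto s_{\lambda'}$ it preserves Schur-positivity, so $f_{j+1}-f_j=\omega\,\mathrm{ch}\,C_j$ is Schur-positive for every $j$ with $2j<m$, i.e.\ for $0\le j\le\frac{m-1}{2}$. Since $m=|E|$, this is exactly the condition in the definition of Schur-unimodality (and it dovetails with the palindromicity $\widetilde{X}_G(t)=t^{|E|}\widetilde{X}_G(t^{-1})$ of Theorem \ref{theo: palindrom}, which renders the upper half of the sequence automatic). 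If one had the identity with $\widetilde{X}_G(t)$ in place of $\omega\widetilde{X}_G(t)$, nothing changes, since Schur-unimodality is itself preserved by $\omega$.

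Finally, I would note the alternative route and why I would not take it for this step. Guay-Paquet's proof of the Shareshian--Wachs identity is essentially combinatorial --- it matches $\omega\widetilde{X}_G(t)$ and the Hessenberg character through a Hopf-algebraic recursion and common three-term ``modular'' relations, pinned down on disjoint unions of cliques --- but Schur-unimodality itself still funnels through Hard Lefschetz on $\mathrm{Hess}(h)$. A direct combinatorial proof, say by weight-preserving injections between the sets of $P$-tableaux of consecutive weights (in the spirit of the injections constructed above for the horseshoe crab coefficients), would be very desirable; however the modular relations carry signs and do not visibly preserve unimodality, so I would not expect that approach to go through without genuinely new input.
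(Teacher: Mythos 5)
The paper does not prove this statement at all---it is imported verbatim from \cite{BroCho} and \cite{Guay2Paq}, so there is no internal argument to compare yours against. Your sketch is a faithful reconstruction of how the result is actually established in the literature: Shareshian and Wachs \cite{SharWa} already observed that their conjecture together with an $S_n$-equivariant hard Lefschetz theorem on the regular semisimple Hessenberg variety yields Schur-unimodality, and the monodromy realization of Tymoczko's dot action in \cite{BroCho} supplies the needed equivariance of the Lefschetz operator exactly as you describe, so your proposal is correct and consistent with the cited proof.
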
\label{theo: s-unimod}

With this theorem and the palindromicity result stated in Theorem \ref{theo: palindrom} we can now generalize the method of weight preserving injections.

\begin{lemma}\label{lem: weight shift}
If there exists an injection $\psi : \mathcal{T}_{\lambda_1} \to \mathcal{T}_{\lambda_2}$ and a constant $c\in \mathbb{Z}$ such that:
$$\forall T \in \mathcal{T}_{\lambda_1}: \mathrm{inv}_G(\psi(T))=\mathrm{inv}_G(T) + c .$$
Then the coeffiecients of $S_{\lambda_2}$ dominate the coefficients of $S_{\lambda_1}$ or equivalently there are at least as many $P$-tableaux of shape $\lambda_2$ as there are of shape $\lambda_1$ for any particular weight.
\end{lemma}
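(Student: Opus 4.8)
The plan is to reduce the statement to a comparison of two sequences of nonnegative integers and then to ``remove'' the constant shift $c$ using palindromicity (Theorem~\ref{theo: palindrom}) together with Schur-unimodality (Theorem~\ref{theo: s-unimod}); here $G$ is the natural unit interval graph underlying the $P$-tableaux, so both theorems apply. For $i\in\mathbb{Z}$ put $a_i=\#\{T\in\mathcal{T}_{\lambda_1}:\mathrm{inv}_G(T)=i\}$ and $b_i=\#\{T\in\mathcal{T}_{\lambda_2}:\mathrm{inv}_G(T)=i\}$, so that $a_i=b_i=0$ outside a finite range. By~(\ref{coef: s}) these are precisely the coefficient sequences of $S_{\lambda_1}(t)/s_{\lambda_1}$ and $S_{\lambda_2}(t)/s_{\lambda_2}$, and also precisely the numbers of $P$-tableaux of the two shapes having each given weight, so the lemma is exactly the assertion that $b_i\ge a_i$ for every $i$.

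The first ingredient is the injection itself: since $\psi$ is injective and $\mathrm{inv}_G(\psi(T))=\mathrm{inv}_G(T)+c$, it restricts for each $i$ to an injection from the $\lambda_1$-tableaux of weight $i$ into the $\lambda_2$-tableaux of weight $i+c$, which yields the one-sided bound $b_{i+c}\ge a_i$ for all $i$. The second ingredient is structural: from Theorem~\ref{theo: s-pos} we have $\widetilde{X}_G(t)=\sum_\lambda S_\lambda(t)$, so, by linear independence of the Schur functions, Theorem~\ref{theo: palindrom} forces $a_i=a_{m-i}$ and $b_i=b_{m-i}$ with $m=|E|$, and Theorem~\ref{theo: s-unimod} forces $a_{i+1}\ge a_i$ and $b_{i+1}\ge b_i$ for all $0\le i\le\tfrac{m-1}{2}$. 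In particular each of $(a_i)$, $(b_i)$ is supported in $\{0,\dots,m\}$, symmetric under $i\mapsto m-i$, and satisfies $x_p\le x_q$ whenever $0\le p\le q\le m/2$.

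To finish, apply the symmetry to the one-sided bound: $b_{i+c}\ge a_i$ becomes $b_{m-i-c}\ge a_{m-i}$, i.e.\ $b_{j-c}\ge a_j$ for all $j$; combining this with the original $b_{j+c}\ge a_j$ gives, in either sign of $c$, the bound $b_{j-d}\ge a_j$ for all $j$, where $d:=|c|$. Now fix $n$; by the symmetry of both sequences we may assume $n\le m/2$. If $n-d<0$ then $0=b_{n-d}\ge a_n$, so $a_n=0\le b_n$. If $n-d\ge 0$ then $0\le n-d\le n\le m/2$, so $b_n\ge b_{n-d}\ge a_n$ by the monotonicity of $(b_i)$ on $\{0,\dots,m/2\}$. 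In all cases $b_n\ge a_n$, and reading these back as $P$-tableau counts (equivalently, as the coefficients of $S_{\lambda_2}$ and $S_{\lambda_1}$) is the claimed domination.

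The only genuine difficulty is coping with $c\ne 0$: for a weight-preserving injection ($c=0$) the bound $b_n\ge a_n$ is immediate, but for a real shift one must first ``spend'' palindromicity to convert the one-sided estimate $b_{i+c}\ge a_i$ into a bound of the form $b_{n-d}\ge a_n$, and then ``spend'' unimodality of the \emph{target} sequence $(b_i)$ to climb from index $n-d$ back up to index $n$. The remaining work is boundary bookkeeping---indices leaving $\{0,\dots,m\}$, and reducing the case $n>m/2$ to $n\le m/2$ via palindromicity---which is routine once the symmetric-unimodal picture is set up, though it is the part I would write out most carefully to avoid off-by-one errors.
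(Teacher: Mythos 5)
Your proposal is correct and follows essentially the same route as the paper: the injection gives the shifted bound $S^{(j)}_{\lambda_1}\le S^{(j+c)}_{\lambda_2}$, and palindromicity plus Schur-unimodality of the target coefficient sequence absorb the shift. Your version is in fact written a bit more carefully than the paper's (which splits into the cases $c\ge 0$ and $c<0$ on opposite halves rather than symmetrizing to $b_{j-|c|}\ge a_j$ first, and glosses over the out-of-range indices you handle explicitly).
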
\label{lem: constant weight shift}

\begin{proof}
Let $S^{(j)}_{\lambda_i}$ be the coefficient for the term $t^js_{\lambda_i}$. To show that $S^{(j)}_{\lambda_1}\le S^{(j)}_{\lambda_2}$, notice that the weight shifting injection shows $S^{(j)}_{\lambda_1}\le S^{(j+c)}_{\lambda_2}$. First consider the case that $c\ge 0$. By palindromicity the terms are symmetrical so it is enough to prove it for the second half of the terms, that is to show that when  $j>\frac{|E|}{2}$ we have
$$S^{(j)}_{\lambda_1}\le S^{(j)}_{\lambda_2}.$$ 
Since
$$S^{(j)}_{\lambda_1}\le S^{(j+c)}_{\lambda_2}$$
and $j>\frac{|E|}{2}$, palindromicity and Schur-unimodality result in
$$S^{(j+c)}_{\lambda_2}\ge S^{(j)}_{\lambda_2},$$
as desired.\\
If $c$ is negative take $j\le \frac{|E|}{2}$ and get
$$S^{(j)}_{\lambda_1}\le S^{(j+c)}_{\lambda_2}\le S^{(j)}_{\lambda_2}$$
by Schur-unimodality.
\end{proof}
.
Using Lemma \ref{lem: constant weight shift} we can now continue the analysis of Conjecture \ref{theo: 2,m2,m3}.

\renewcommand*{\proofname}{Continuation of the partial proof of Conjecture \ref{theo: 2,m2,m3}}
\begin{proof}
The last neccessary step in order to prove $e$-positivity of $\widetilde{X}_G(t)$ is showing that the polynomial $E_{(n-3,3)}(t)$ only has positive coefficients. By Lemma \ref{lem: constant weight shift} and equation (\ref{coef: 1 in 1 hard}) it is sufficient to show that there exists an injection
$$\psi: \mathcal{T}_{(3,2,1^{n-5})} \to \mathcal{T}_{(2^3, 1^{n-6})}$$
with a constant weight shift. The details of $\psi$ are illustrated in Figure \ref{fig: 1 in 1 hard}. As seen easily this injection adds exactly one weight, since only the $\{1,2\}$ inversion is added. Furthermore for any $T \in \psi(\mathcal{T}_{(3,2,1^{n-5})})$ the pre-image can easily be recovered by moving the 1 in front of the 3. Therefore $\psi$ is injective.

\begin{figure}
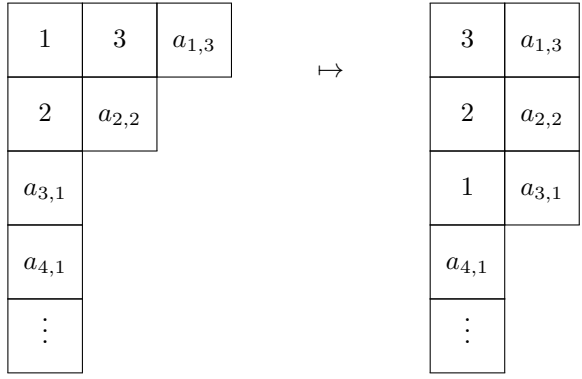

    \centering
    \begin{tabular}{c c c}
            \ytableausetup{mathmode, boxsize=2.75em}
             \begin{ytableau}
                1 & 3 & a_{1,3}\\
                2 & a_{2,2}\\
                a_{3,1}\\
                a_{4,1}\\
                \vdots
            \end{ytableau}&
            \qquad $\mapsto$ &
            \qquad
            \begin{ytableau}
            3 & a_{1,3}\\
            2 & a_{2,2}\\
            1 & a_{3,1}\\
            a_{4,1}\\
            \vdots
        \end{ytableau}
             
        \end{tabular}
    \caption{The map $\psi$}
    \label{fig: 1 in 1 hard}
\end{figure}
Hence $\widetilde{X}_G(t)$ is $e$-positive.
\end{proof}

Conjecture \ref{theo: 2,m2,m3} can even be further extended by combining it with already known $e$-positivity results summarized by Remark \ref{rem: known}.
With all this preparation we derive the following from Conjecture \ref{theo: 2,m2,m3}.
\begin{conj}\label{theo: m1_m2_m3,n,...,n}
Let $\mathbf{m}=(m_1,m_2,m_3,n,\ldots,n)$ and $G$ be the incomparability graph of $P(\mathbf{m})$. Then $\widetilde{X}_G(t)$ is $e$-positive.
\end{conj}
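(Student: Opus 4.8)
The plan is to reduce Conjecture \ref{theo: m1_m2_m3,n,...,n} to Conjecture \ref{theo: 2,m2,m3} together with the second item of Remark \ref{rem: known}, by a case analysis on the value of $m_1$. Recall that $m_1 \in \{1,2,\ldots,n\}$ and the sequence must be non-decreasing with $i \le m_i \le n$. If $m_1 = 1$, then $1$ is comparable to every $j \ge 2$, so the vertex $1$ is isolated in the incomparability graph $G$; writing $G = \{1\} \sqcup G'$ where $G'$ is the incomparability graph of $P(m_2,m_3,n,\ldots,n)$ on $[n]\setminus\{1\}$, which is (after relabelling) of the form $P(m_2',n,\ldots,n)$ and hence $e$-positive by the first item of Remark \ref{rem: known}, we get $\widetilde{X}_G(t) = \widetilde{X}_{\{1\}}(t)\cdot \widetilde{X}_{G'}(t) = e_1 \cdot \widetilde{X}_{G'}(t)$, a product of $e$-positive quasisymmetric functions, hence $e$-positive. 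If $m_1 = 2$, this is exactly Conjecture \ref{theo: 2,m2,m3}. If $m_1 = 3$, this is exactly the second item of Remark \ref{rem: known} with $r = 3$. So the only remaining case is $m_1 \ge 4$.

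For the case $m_1 \ge 4$, the key observation is that the hypothesis $i \le m_i$ combined with $m_1 \ge 4$ and monotonicity forces $m_2 \ge 4$ and $m_3 \ge 4$, so in fact $m_i \ge 4$ for $i \in \{1,2,3\}$; but we are also told $m_i = n$ for $i \ge 4$. I would argue that when $m_1 \ge 4$ the sequence $\mathbf{m} = (m_1, m_2, m_3, n, \ldots, n)$ is already of the form covered by the second item of Remark \ref{rem: known}: indeed, take $r = m_1 \ge 4$; then the sequence $(r, m_2, m_3, \ldots, m_r, n, \ldots, n)$ with $r = m_1$ matches ours provided $m_4 = \cdots = m_r = n$, which holds because every index from $4$ up to $r \le n$ satisfies $m_i = n$ by hypothesis (the tail is constant $n$ from position $4$ on). One must double-check the boundary: we need $m_1 \le n$, which is part of the definition, and we need the listed entries $m_2, m_3$ to satisfy $m_i \le n$, again automatic. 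Thus Remark \ref{rem: known} directly yields $e$-positivity of $\widetilde{X}_G(t)$ in this case.

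Assembling the four cases $m_1 = 1$, $m_1 = 2$, $m_1 = 3$, $m_1 \ge 4$ completes the derivation: the first is handled by disjoint-union multiplicativity of $\widetilde{X}$ (equation following \eqref{eq: disjoint quasi}) plus the first bullet of Remark \ref{rem: known}; the second is Conjecture \ref{theo: 2,m2,m3}; and the third and fourth are subsumed by the second bullet of Remark \ref{rem: known} with $r = \max(3, m_1)$. I would present this as: "\emph{Proof (assuming Conjecture \ref{theo: 2,m2,m3}).} Case analysis on $m_1$ \ldots", making explicit that the statement is conditional on the unresolved $E_{(n-1,1)}$ coefficient of Conjecture \ref{theo: 2,m2,m3}.

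The main obstacle is not in this reduction, which is essentially bookkeeping, but upstream: the only genuinely open piece is the positivity of $E_{(n-1,1)}(t)$ in equation \eqref{coef: 2 in 2} for the horseshoe crab family, since the partial proof of Conjecture \ref{theo: 2,m2,m3} establishes all other coefficients. Consequently Conjecture \ref{theo: m1_m2_m3,n,...,n} inherits exactly that one gap, and any honest writeup should flag that the "derivation" is really a conditional reduction rather than an unconditional theorem. A secondary minor subtlety worth verifying carefully is that in the $m_1 = 1$ case the restricted order on $[n] \setminus \{1\}$ genuinely is a natural unit interval order of the claimed shape after the index shift — i.e. that the defining inequality $i \le m_i \le n$ is preserved under deleting the first coordinate and relabelling — but this is routine.
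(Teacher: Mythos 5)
Your proposal is correct and follows essentially the same route as the paper's own justification: a case split on $m_1$, with $m_1=1$ handled by isolating vertex $1$ and invoking multiplicativity plus the first bullet of Remark \ref{rem: known}, $m_1=2$ deferred to Conjecture \ref{theo: 2,m2,m3}, and $m_1\ge 3$ absorbed into the Cho--Huh case $(r,m_2,\ldots,m_r,n,\ldots,n)$ with $r=m_1$. Your explicit flagging of the conditional status (the open $E_{(n-1,1)}$ coefficient) matches the paper's framing of the statement as a conjecture with a ``justification'' rather than a proof.
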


\renewcommand*{\proofname}{Justification}
\begin{proof}
Let $G$ denote the incomparability graph of $P(\mathbf{m})$. The sequence $\mathbf{m}=(m_1,m_2,m_3,n,\ldots,n)$ can be split into three different cases in respect to the value of $m_1$:
\begin{itemize} 
    \item $\mathbf{m_1=1}$: Then 1 is comparable with every element in $[n]$ and therefore contributes a disconnected vertex in the incomparability graph of $P(\mathbf{m})$. Since the chromatic (quasi)symmetric function of a single vertex is given by $e_1$, it is $e$-positive. The other part of the incomparability graph is isomorphic to the incomparability graph $H$ induced by the sequence $(m_2,m_3,n-1,\ldots,n-1)$, this is known to be $e$-positive by Remark \ref{rem: known}. So by equation (\ref{eq: disjoint quasi}) 
    $$\widetilde{X}_G(t)=e_1 \cdot \widetilde{X}_H(t)$$
    and is therefore $e$-positive.
    \item $\mathbf{m_1=2}$: This case is proven to be $e$-positive by Conjecture \ref{theo: 2,m2,m3}.
    \item $\mathbf{m_1\geq 3}$: This case can be reduced to $(r,m_2,m_3,\ldots,m_r,n,\ldots,n)$ by setting $m_i=n \: \mathrm{for} \: 4 \leq i \leq r$ and so it is $e$-positive.
\end{itemize}
\end{proof}
\section{Looking Forward}\label{future}
Investigations on simple cases show that any explicit weight preserving injection in the incomplete case would be quite pathological but the construction of such an injection certainly should be possible. The work of Shareshian and Wachs (\cite{SharWa}) and Cho and Huh (\cite{ChoHuh}) establishes that unit interval graphs on $n$ vertices with bodies of size $n-2$ are $e$-positive. This is the first step in extending that result to unit interval graphs with bodys of size $n-3$. The remaining cases are, up to isomorphism, induced by sequences of the form
$$(m_1,m_2,n-1,\dots,n-1,n,\dots,n),$$
where $1\le m_1\le m_2\le n-1$. More simply put they are precisely those induced by sequences with $n-1$ as the third element. An immediate observation is that this case has the same permissible tableaux shapes as Conjecture \ref{theo: 2,m2,m3}, so it would have the same set of injections. This suggests it might be possible in some cases to use similar or even identical injections as those developed here, for instance the injection used for $E_{(n-2,2)}$ works again without modification.
\section{Acknowledgements}

The authors thank Owen Merkel for suggesting the problem $e$-positivity of generalized nets, and Ch\'inh Ho\`{a}ng for suggesting the problem of $e$-positivity of claw-free, $P_4$-sparse graphs. The authors also thank Leo Tenenbaum for providing code used towards this project. This work was supported by the Canadian Tri-Council Research Support Fund. The author A.M.F. was supported by an NSERC Discovery Grant.  This research was conducted at the Fields Institute, Toronto, Canada as part of the 2018 Fields Undergraduate Summer Research Program and was funded by that program.

The authors thank the referees whose helpful suggestions have improved the paper.

\addcontentsline{toc}{section}{References}

\end{document}